
\documentclass[12pt,oneside]{amsart}


\usepackage{setspace,amssymb,amsmath,mathrsfs,enumitem,textcomp,amsbsy,url}
\usepackage[all]{xy}


\usepackage{datetime}
\renewcommand{\dateseparator}{-}
\renewcommand{\today}{\the\year \dateseparator \twodigit\month
\dateseparator \twodigit\day}

\usepackage
[pdfauthor={Stergios M. Antonakoudis},
 bookmarks=false,
 hyperfootnotes=false]
{hyperref}

\usepackage{needspace,graphicx,extpfeil,pdfpages}


\usepackage{fancyhdr}
\pagestyle{plain}
\fancypagestyle{plain}
{
  \fancyhf{}               
  \fancyhead[C]{\thepage}  
}



\addtolength{\oddsidemargin}{-0.5 in} 
\addtolength{\evensidemargin}{-0.5 in} 
\addtolength{\textwidth}{1 in}

\addtolength{\topmargin}{-0.45in} 
\addtolength{\textheight}{0.6in}

\setlength{\footskip}{30pt}


\newcounter{restatecount}
\setcounter{restatecount}{0}
\newcommand{\restate}[3][Theorem]
{
\addtocounter{restatecount}{1}
\theoremstyle{plain}
\newtheorem*{thm-\arabic{restatecount}}{#1 \ref{#2}}
\begin{thm-\arabic{restatecount}}
#3
\end{thm-\arabic{restatecount}}
}



\theoremstyle{plain}
\newtheorem{thm}{Theorem}[section]

\newtheorem{thm-ref}{Theorem}


\newtheorem{cor}[thm]{Corollary}
\newtheorem{prop}[thm]{Proposition}

\theoremstyle{definition}

\newtheorem*{examples}{Examples}

\theoremstyle{remark}
\newtheorem*{remark}{Remark}
\newtheorem*{remarks}{Remarks}

\numberwithin{equation}{section}


\addtocontents{toc}{\protect\setcounter{tocdepth}{1}}


\newcommand{\M}{{\mathcal M}}
\newcommand{\T}{{\mathcal T}}

\newcommand{\C}{{\mathbb C}}
\newcommand{\R}{{\mathbb R}}




\title{Teichm\"uller spaces and bounded symmetric domains do not mix
  isometrically}
 \author{Stergios M. Antonakoudis}


\begin{document}

  \maketitle
   \begin{abstract}
    This paper shows that, in dimensions two or more, there are no
    holomorphic isometries between Teichm\"uller spaces and bounded
    symmetric domains in their intrinsic Kobayashi metric.
   \end{abstract}
  \tableofcontents


\section{Introduction}\label{sec:intro}

We study holomorphic maps between Teichm\"uller spaces
$\T_{g,n} \subset \C^{3g-3+n}$ and bounded symmetric domains
$\mathcal{B}\subset\C^{N}$ in their intrinsic Kobayashi metric. The
main result in this paper is the following theorem.

\begin{thm}\label{thm:bsds:intro}
  Let $\mathcal{B}$ be a bounded symmetric domain and $\T_{g,n}$ be a
  Teichm\"uller space with $\text{dim}_{\C}\mathcal{B},\text{dim}_{\C}\T_{g,n}
  \geq 2$. There are no holomorphic isometric immersions \[\mathcal{B}
  \xhookrightarrow{~~~f~~~} \T_{g,n} \quad \text{or} \quad \T_{g,n}
  \xhookrightarrow{~~~f~~~} \mathcal{B}\] such that $df$ is an isometry for the
  Kobayashi norms on tangent spaces.
\end{thm}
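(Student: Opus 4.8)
The plan is to discard the global map immediately and work with the induced linear isometries of tangent spaces, turning the problem into one about the regularity of the two Kobayashi norms. If $f$ is a holomorphic isometric immersion, then for each $p$ the differential $df_p$ is an injective $\C$-linear map with $\|df_p(v)\|_{\mathrm{target}} = \|v\|_{\mathrm{source}}$ for every $v$; equivalently, the Kobayashi norm of the source is the restriction, under the linear embedding $df_p$, of the Kobayashi norm of the target to the subspace $df_p(T_p)$. So everything reduces to the following question: can the Kobayashi norm of a \teich\ space occur as a linear slice of the Kobayashi norm of a bounded symmetric domain, or vice versa? My discriminating invariant is the order of differentiability of these Finsler norms off the zero section, which behaves very differently on the two sides, and which is insensitive only in complex dimension one --- precisely the case the theorem must exclude.

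On the \teich\ side I would use Royden's identification of the Kobayashi metric with the \teich\ metric, whose infinitesimal form is governed by the $L^1$ norm $\|q\| = \int_X |q|$ on the cotangent space $Q(X)$ of holomorphic quadratic differentials at $X$. Royden's analysis shows that the resulting Finsler norm on $T_X\T_{g,n}$ is $C^1$ off the origin but not $C^2$: the obstruction is the conical behaviour of $|q|$ at the zeros of $q$ --- formally, the second variation of $\int_X|q|$ carries a term $\int_X |p|^2/|q|$ that diverges for directions $p$ not vanishing at the zeros of $q$. A quadratic differential on $X$ has $4g-4+n \ge 1$ zeros exactly when $\dim_\C \T_{g,n} = 3g-3+n \ge 2$, so this non-$C^2$ behaviour fills a large set of directions in dimension two or more, whereas in dimension one it is absent ($Q(X)\cong\C$ and $\|q\| = c\,|q|$ is real-analytic) --- which is what compels the dimension hypothesis.

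On the bounded symmetric domain side the Kobayashi norm has a complementary regularity profile. By the polydisk theorem its norm is real-analytic on the dense open set of regular tangent directions, while on the singular strata --- present as soon as the rank is at least two, for instance the ties in the $\max$ defining the norm of a polydisk --- it fails to be even $C^1$. Thus a bounded symmetric domain norm is never merely $C^1$ without being $C^2$: where it is $C^1$ it is in fact real-analytic, and its only failures of smoothness are outright failures of $C^1$. This is the structural gap I would exploit, since the \teich\ norm lives precisely in the forbidden regime --- globally $C^1$ yet nowhere twice differentiable along the directions governed by zeros of quadratic differentials.

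Finally I would assemble the two directions from this regularity gap. For $\T_{g,n} \hookrightarrow \mathcal{B}$, the \teich\ norm equals a linear slice of the $\mathcal{B}$-norm; wherever the $(\ge 2)$-dimensional image meets the regular locus of $\mathcal{B}$ the slice is real-analytic, forcing the \teich\ norm to be $C^2$ there and contradicting the previous paragraph. For $\mathcal{B} \hookrightarrow \T_{g,n}$, the $\mathcal{B}$-norm equals a linear slice of the $C^1$ \teich\ norm, hence is $C^1$ on the image; when $\mathcal B$ has rank at least two this already contradicts the failure of $C^1$ on its singular strata, and when $\mathcal B$ is a ball one instead contradicts real-analyticity by producing, inside the image, a direction where the \teich\ norm is not $C^2$. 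The hard part, and the technical heart of the argument, is the transversality statement underlying both horns: one must show that a finite-dimensional subspace of dimension at least two cannot be chosen to avoid the regularity gap --- that the image necessarily meets both the non-$C^2$ locus coming from zeros of quadratic differentials and the regular (or singular) locus of the symmetric domain. Establishing this genuinely requires combining the explicit geometry of the zero divisors of quadratic differentials with the tripotent stratification of the tangent cone of $\mathcal{B}$, and it is here that I expect the real work to lie.
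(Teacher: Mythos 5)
The decisive gap is in the rank-one case of $\mathcal{B}\hookrightarrow\T_{g,n}$, i.e.\ ruling out $\mathbb{CH}^2\hookrightarrow\T_{g,n}$. Your purely infinitesimal strategy reduces this to showing that no complex two-dimensional linear subspace of $T_X\T_{g,n}$ carries the round Hermitian norm as a slice of the \teich--Kobayashi norm --- but that is exactly Question~2 of the introduction, which the paper explicitly leaves \emph{open} (and says is only ``suggested'' to have a negative answer by the theorem you are trying to prove). Royden's non-$C^2$ statement concerns the cotangent norm $\|q\|_1$ and specific perturbation directions $p$ that do not vanish at the zeros of $q$; it does not show that \emph{every} two-plane in the tangent space contains such a bad direction, and the ``transversality statement'' you defer to the last paragraph is not a technical footnote but the entire, currently unavailable, content of the argument. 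The paper avoids this by arguing globally: Slodkowski's theorem first upgrades the infinitesimal isometry to a totally geodesic, distance-preserving map, and then the contradiction comes from extremal-length horocycles, Masur's unique-ergodicity criterion for asymptotic \teich\ geodesics, and the explicit formula $i(\mathcal{F}(e^{i\theta}q),\mathcal{G})=\sum_{i=1}^{N}|\mathrm{Re}(e^{i\theta/2}v_i)|$, which cannot be constant in $\theta$. (Your bidisk observation --- a $C^1$ tangent norm admits no sup-norm slice --- is plausible, but the paper again argues globally, via Sullivan rigidity: there is no \emph{proper} holomorphic map $\mathbb{CH}^1\times\mathbb{CH}^1\to\T_{g,n}$ at all.)

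For the direction $\T_{g,n}\hookrightarrow\mathcal{B}$ your ``regularity gap'' is genuinely close in spirit to the paper's argument, but it is applied to the wrong object at the wrong threshold. The paper compares \emph{distance functions}, not Finsler norms: by Rees, $d_{\T_{g,n}}$ is $C^2$ off the diagonal yet, along suitable real-analytic paths, not $C^{2+h}$ for any gauge $h$ with $h(t)/(1/\log(1/|t|))\to 0$; by the Harish--Chandra operator-norm description and Puiseux expansions of eigenvalues, $d_{\mathcal{B}}(p,V(t))$ along a real-analytic path is real-analytic in $t^{1/K}$, hence automatically $C^{2+1/K}$ once it is $C^2$. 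Since Slodkowski makes $f$ distance-preserving, these two facts are incompatible. Your norm-level version would again need the unresolved transversality input (that the image of $T_X\T_{g,n}$ meets the regular locus of the symmetric-domain norm in a direction where the \teich\ norm fails to be $C^2$), and the claim that the tangent-space \teich\ norm is not $C^2$ in all the relevant directions is not established --- Royden's result lives on the cotangent side and is direction-dependent. As written, the proposal identifies a suggestive dichotomy but does not constitute a proof of either implication.
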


The proof involves ideas from geometric topology and leverages the
description of Teichm\"uller geodesics in terms of measured foliations
and extremal length on Riemann surfaces.

We note the following immediate corollary.
\begin{cor}\label{cor-symmetric}
  There is no locally symmetric variety $\mathcal{V}$ isometrically
  immersed in the moduli space of curves $\M_{g,n}$, nor is there an
  isometric copy of $\M_{g,n}$ in $\mathcal{V}$, for the Kobayashi
  metrics, so long as both have dimension two or more.
\end{cor}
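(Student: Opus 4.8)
The plan is to deduce the corollary directly from Theorem~\ref{thm:bsds:intro} by passing to universal covers. A (Hermitian) locally symmetric variety is, by definition, a quotient $\mathcal V=\Gamma\backslash\mathcal B$ of a bounded symmetric domain $\mathcal B$ by a discrete group $\Gamma$ of biholomorphic isometries, and the moduli space is the quotient $\M_{g,n}=\T_{g,n}/\mathrm{Mod}_{g,n}$ of \teich{} space by the mapping class group. Both $\mathcal B$ and $\T_{g,n}$ are contractible complex manifolds, so they are the (orbifold) universal covers of $\mathcal V$ and $\M_{g,n}$, with holomorphic covering projections $p\colon\mathcal B\to\mathcal V$ and $q\colon\T_{g,n}\to\M_{g,n}$.

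The first step is to record the metric input: a holomorphic covering map is a local isometry for the infinitesimal Kobayashi (Kobayashi--Royden) norm. Indeed, a holomorphic disc into the base lifts through any chosen preimage to a disc into the total space, and conversely every disc upstairs projects down; since the two families of competing discs correspond under the covering, the Kobayashi norms of matching tangent vectors agree. In particular $dp$ and $dq$ are isometric isomorphisms on tangent spaces (over the locus where the groups act freely), and the Kobayashi norm on each base space is exactly the descent of the one upstairs.

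Next, given a holomorphic isometric immersion $f\colon\mathcal V\hookrightarrow\M_{g,n}$, I would lift it. Because $\mathcal B$ is simply connected, the composite $f\circ p\colon\mathcal B\to\M_{g,n}$ admits a holomorphic lift $\tilde f\colon\mathcal B\to\T_{g,n}$ with $q\circ\tilde f=f\circ p$. Differentiating gives $d\tilde f=(dq)^{-1}\circ df\circ dp$ locally, and since $dp$, $dq$ are Kobayashi isometric isomorphisms while $df$ is an isometric injection, the composite $d\tilde f$ is an isometric injection for the Kobayashi norms on every tangent space. As covering maps preserve complex dimension, $\dim_\C\mathcal B=\dim_\C\mathcal V\geq2$ and $\dim_\C\T_{g,n}=\dim_\C\M_{g,n}\geq2$, so $\tilde f\colon\mathcal B\hookrightarrow\T_{g,n}$ is precisely a map forbidden by Theorem~\ref{thm:bsds:intro}, a contradiction. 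An isometric copy $\M_{g,n}\hookrightarrow\mathcal V$ is handled symmetrically: lifting along $p$ (using that $\T_{g,n}$ is simply connected) produces a forbidden immersion $\T_{g,n}\hookrightarrow\mathcal B$.

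The one point I would treat with care, and the only genuine subtlety, is the orbifold structure: $\M_{g,n}$ (and possibly $\mathcal V$) has quotient singularities where $\mathrm{Mod}_{g,n}$ (resp.\ $\Gamma$) fails to act freely, so that $p,q$ are only orbifold coverings and the local-isometry statement above holds a priori only over the regular loci. To make "immersion'' and "isometry on tangent spaces'' unambiguous, I would first replace these groups by finite-index torsion-free subgroups --- invoking Selberg's lemma for $\Gamma$ and the virtual torsion-freeness of the mapping class group --- so that the relevant quotients are smooth and the projections are honest local biholomorphisms; precomposing the given immersion with such a finite cover preserves the isometric-immersion hypothesis. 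In any case the universal covers $\mathcal B$ and $\T_{g,n}$ are genuine manifolds, so the lift $\tilde f$ is a bona fide holomorphic isometric immersion between manifolds, which is exactly what Theorem~\ref{thm:bsds:intro} rules out.
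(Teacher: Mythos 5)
Your proposal is correct and is exactly the argument the paper intends: the paper states this corollary as ``immediate'' from Theorem~\ref{thm:bsds:intro} and gives no separate proof, the implicit reasoning being precisely your passage to the (orbifold) universal covers $\mathcal B$ and $\T_{g,n}$, where holomorphic coverings are local Kobayashi isometries. Your extra care with the orbifold locus (torsion-free finite-index subgroups) fills in a detail the paper leaves unstated.
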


A feature that Teichm\"uller spaces and bounded symmetric domains have
in common is that they contain holomorphic isometric copies of
$\mathbb{CH}^1$ through every point and complex direction; in
particular, in complex dimension one, Teichm\"uller spaces and bounded
symmetric domains coincide.


In higher dimensions, it is known that there are many holomorphic
isometries between Teichm\"uller spaces
$f: \T_{g,n} \hookrightarrow \T_{h,m}$ ~\cite{Kra:survey} and bounded
symmetric domains
$f: \mathcal{B} \hookrightarrow \widetilde{\mathcal{B}}$
~\cite{Helgason:book:dglgss}, respectively, in their intrinsic
Kobayashi metric.

Informally, our results show that in dimensions two or more
Teichm\"uller spaces and bounded symmetric domains \textit{do not mix}
isometrically.~\\


As an application of Theorem~\ref{thm:bsds:intro}, we prove:
\begin{thm}\label{thm:kaehler}
  Let $(\mathcal{M},g)$ be a complete K\"ahler manifold with
  $\text{dim}_{\C}\mathcal{M} \geq 2$ and holomorphic sectional
  curvature at least $-4$. There is no holomorphic map
  $f:\mathcal{M}\rightarrow \T_{g,n}$ such that $df$ is an isometry on
  tangent spaces.
\end{thm}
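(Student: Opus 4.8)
The plan is to promote the fibrewise isometry hypothesis into a curvature rigidity statement, thereby reducing Theorem~\ref{thm:kaehler} to the case already excluded by Theorem~\ref{thm:bsds:intro}. Write $k_{\T}$ for the Kobayashi metric on $\T_{g,n}$ (equal to the Teichm\"uller metric, by Royden) and $\rho$ for the Kobayashi metric of the unit disk, normalized to have holomorphic sectional curvature $-4$. The hypothesis that $df$ is a fibrewise isometry says precisely that $f^{*}k_{\T}=g$ as norms on $T\mathcal{M}$. First I would record the elementary half: composing with any holomorphic disk $\psi\colon\Delta\to\mathcal{M}$ and using that holomorphic maps are distance non-increasing for the Kobayashi metric gives $\psi^{*}g=(f\circ\psi)^{*}k_{\T}\le\rho$, i.e.\ $g\le k_{\mathcal{M}}$; so $\mathcal{M}$ is in particular Kobayashi hyperbolic.

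The core step is to apply a Yau--Royden Schwarz lemma to $f$ itself, with source $(\mathcal{M},g)$ complete K\"ahler of holomorphic sectional curvature $\ge-4$, and target $(\T_{g,n},k_{\T})$ whose holomorphic sectional curvature is $\le-4$ (every holomorphic disk satisfies $\psi^{*}k_{\T}\le\rho$, the Teichm\"uller disks realizing the value $-4$). The lemma yields $f^{*}k_{\T}\le g$, which, combined with the hypothesis $f^{*}k_{\T}=g$, places us exactly at the equality case. Running the Bochner/maximum-principle computation underlying the Schwarz lemma, equality forces the curvature-defect terms to vanish, so that $\mathcal{M}$ attains its lower bound along $f$. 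Since $df$ is a full isometric immersion, equality holds in every tangent direction at every point --- not merely along a gradient direction --- and I expect this to force holomorphic sectional curvature $\equiv-4$ on all of $\mathcal{M}$.

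With constant holomorphic sectional curvature $-4$ in hand, $\mathcal{M}$ is a complex space form, so its universal cover is $\mathbb{CH}^{n}$ with $n=\dim_{\C}\mathcal{M}\ge2$, whose constant-curvature metric is exactly the Kobayashi metric of the ball (this matches the $-4$ normalization, under which $\mathbb{CH}^{1}$ is the Kobayashi disk). The covering $\pi\colon\mathbb{CH}^{n}\to\mathcal{M}$ is a holomorphic local isometry, so $f\circ\pi\colon\mathbb{CH}^{n}\to\T_{g,n}$ is a holomorphic immersion for which $d(f\circ\pi)$ is an isometry for the Kobayashi norms. As $\mathbb{CH}^{n}$ is a bounded symmetric domain of dimension $\ge2$, this contradicts Theorem~\ref{thm:bsds:intro} and proves the theorem.

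The main obstacle is the equality case of the Schwarz lemma with the \emph{Finsler} target $(\T_{g,n},k_{\T})$: the Teichm\"uller metric is not a smooth Hermitian metric, so the curvature computation cannot be carried out naively, and one cannot assume Teichm\"uller disks are holomorphic retracts or Carath\'eodory-extremal, since that can fail. I would handle this by working along holomorphic disks together with smooth Hermitian metrics supporting $k_{\T}$ from above with enough contact along Teichm\"uller disks, so that the maximum principle localizes to directions where $k_{\T}$ admits the required smooth comparison; the fibrewise-isometry hypothesis then propagates the resulting pointwise rigidity to all of $\mathcal{M}$.
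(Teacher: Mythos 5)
Your overall strategy matches the paper's: force $(\mathcal{M},g)$ to have constant holomorphic sectional curvature $-4$, conclude that (the universal cover of) $\mathcal{M}$ is $\mathbb{CH}^{N}$ with $N\geq 2$, and contradict Theorem~\ref{thm:bsds:intro}. Your endgame is correct, and passing to the universal cover before invoking Theorem~\ref{thm:bsds:intro} is if anything more careful than the paper's phrasing. The divergence, and the gap, lies in how you reach constant curvature $-4$. The paper does this with a single pointwise ingredient --- the monotonicity of holomorphic sectional curvature under holomorphic maps, combined with the Teichm\"uller disks through every point and complex direction, citing Royden --- whereas you route the step through the global Yau--Royden Schwarz lemma and its equality case. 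That detour is both unnecessary (the inequality $f^{*}k_{\T}\leq g$ you extract from the lemma is already your hypothesis, so all the content sits in the equality-case rigidity, which is precisely the pointwise statement the paper uses directly) and, as written, not executable.

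Concretely, two steps would fail. First, your justification that $(\T_{g,n},k_{\T})$ has holomorphic sectional curvature $\leq -4$ --- namely that every holomorphic disk satisfies $\psi^{*}k_{\T}\leq\rho$ --- proves a bound of the opposite sign: if a conformal metric $\lambda$ on the disk satisfies $\lambda\leq\rho$ with $\lambda(0)=\rho(0)$, then $\log(\lambda/\rho)$ has an interior maximum at $0$, its Laplacian there is $\leq 0$, and the generalized Gaussian curvature of $\lambda$ at $0$ is therefore $\geq -4$, not $\leq -4$. Upper bounds on holomorphic curvature require supporting metrics \emph{from below} with curvature $\leq -4$ (such as pullbacks of $\rho$ under Carath\'eodory-extremal maps), and you correctly observe that these need not exist for $k_{\T}$; but your proposed remedy --- smooth Hermitian metrics supporting $k_{\T}$ \emph{from above} along Teichm\"uller disks --- is on the wrong side for the bound you need, so the ``main obstacle'' you flag is not actually resolved. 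Second, the Yau--Royden Schwarz lemma in its standard form requires a Ricci curvature lower bound on the source and a $C^{2}$ Hermitian target; neither is available here (you are given a holomorphic sectional curvature bound, and $k_{\T}$ is a non-smooth Finsler metric), so even granting the target curvature bound the lemma does not apply off the shelf. To close the argument you should replace the global maximum-principle machinery by the local statement the paper invokes: for each $(p,v)$, compare the Gaussian curvature at $p$ of an osculating holomorphic disk tangent to $v$ (which computes $H_{g}(p,v)$ by the Gauss equation) with the Teichm\"uller disk through $f(p)$ in the direction $df(v)$, which is the content of Royden's monotonicity of holomorphic sectional curvature under holomorphic maps.
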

\begin{proof}
  The monotonicity of holomorphic sectional curvature under
  holomorphic maps and the existence of (totally geodesic) holomorphic
  isometries $\mathbb{CH}^1 \hookrightarrow \T_{g,n}$ through every
  complex direction imply that $\mathcal{M}$ has constant holomorphic
  curvature -4.~\cite{Royden:metric} Since $\mathcal{M}$ is a complete
  K\"ahler manifold, we have $\mathcal{M}\cong\mathbb{CH}^{N}$, which
  is impossible when $N \geq 2$ by Theorem~\ref{thm:bsds:intro}.
\end{proof}
The following corollary is immediate.
\begin{cor}\label{cor:kaehler}
  There is no holomorphic, totally geodesic isometry from a K\"ahler
  manifold $\mathcal{M}$ into a Teichm\"uller space $\T_{g,n}$, so long
  as $\mathcal{M}$ has dimension two or more.
\end{cor}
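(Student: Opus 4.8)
The plan is to reduce Corollary~\ref{cor:kaehler} to Theorem~\ref{thm:kaehler} by showing that the totally geodesic hypothesis forces the domain to have exactly the curvature required there, and then to dispose of the only missing ingredient, completeness, by an extension argument that appeals to Theorem~\ref{thm:bsds:intro} directly.

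First, suppose $f:\mathcal{M}\hookrightarrow \T_{g,n}$ is a holomorphic, totally geodesic isometry with $\dim_{\C}\mathcal{M}\geq 2$. Being totally geodesic, $f$ has vanishing second fundamental form, so along the image the Gauss equation identifies the curvature of $(\mathcal{M},g)$ with the restriction of the ambient curvature of the Kobayashi metric on $\T_{g,n}$; in particular the holomorphic sectional curvature of $\mathcal{M}$ in a direction $v$ equals the ambient holomorphic sectional curvature of $\T_{g,n}$ in the direction $df(v)$. Now the Kobayashi metric of $\T_{g,n}$ has holomorphic sectional curvature identically $-4$: through every point and complex direction there is a totally geodesic holomorphic isometric copy of $\mathbb{CH}^1$ (a Teichm\"uller disk), as recalled in the introduction, and the Gauss curvature of such a complex geodesic computes the holomorphic sectional curvature in its tangent direction. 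It follows that $\mathcal{M}$ has constant holomorphic sectional curvature $-4$; in particular its holomorphic sectional curvature is at least $-4$. Thus the totally geodesic assumption takes the place of the curvature lower bound in Theorem~\ref{thm:kaehler}.

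If $\mathcal{M}$ is complete, Theorem~\ref{thm:kaehler} now applies verbatim and yields a contradiction for $\dim_{\C}\mathcal{M}\geq 2$. To remove the completeness assumption I would use that a K\"ahler manifold of constant holomorphic sectional curvature $-4$ is locally isometric to complex hyperbolic space $\mathbb{CH}^N$, $N=\dim_{\C}\mathcal{M}$. A totally geodesic immersion sends geodesics to geodesics preserving initial velocities, so, fixing a point $p_0$ with image $q_0$ and the isometry $df$ there, one can define an extension $F:\mathbb{CH}^N\to\T_{g,n}$ by sending $\exp_{p_0}(v)$ to $\exp_{q_0}(df(v))$; this is well defined because $\mathbb{CH}^N$ is complete and simply connected (Cartan--Hadamard) while $\T_{g,n}$ is complete, so both exponential maps are globally defined. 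Standard rigidity for totally geodesic maps then shows $F$ is again a holomorphic isometric immersion, whence Theorem~\ref{thm:bsds:intro}, applied to the bounded symmetric domain $\mathbb{CH}^N$ with $N\geq 2$, gives a contradiction.

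The main obstacle is regularity: the Kobayashi metric on $\T_{g,n}$ is only a $C^1$ complex Finsler metric, not a smooth K\"ahler metric, so the Gauss-equation step cannot be taken literally and must instead be phrased through the extremal Teichm\"uller disks, comparing the immersed complex geodesics of $\mathcal{M}$ with those of $\T_{g,n}$ and invoking the monotonicity of holomorphic sectional curvature under holomorphic maps in the form already used for Theorem~\ref{thm:kaehler}. Making the curvature identification, and the subsequent extension, precise in this low-regularity Finsler setting---rather than in the classical Riemannian framework---is where the real work lies.
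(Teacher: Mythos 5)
The paper gives no argument for this corollary beyond declaring it an immediate consequence of Theorem~\ref{thm:kaehler}, and your overall skeleton---use the totally geodesic hypothesis to manufacture the curvature bound of Theorem~\ref{thm:kaehler}, then fall back on Theorem~\ref{thm:bsds:intro} for $\mathbb{CH}^N$---is surely the intended route. The difficulty is that both load-bearing steps you add are missing or would fail. The curvature identification cannot come from the Gauss equation: the Teichm\"uller--Kobayashi metric is a Finsler metric which, by Rees's theorems quoted in \S\ref{sec:bsds}, is $C^2$ but not $C^{2+\epsilon}$, so there is no Levi-Civita connection, no second fundamental form, and no classical ambient holomorphic sectional curvature to restrict. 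You acknowledge this, but the acknowledgment defers the entire content of the corollary: the inequality needed to invoke Theorem~\ref{thm:kaehler} is $H_g\ge -4$, and that is exactly the direction for which the totally geodesic hypothesis must be exploited (holomorphy of $f$ together with $df$ being an isometry gives, via Royden's monotonicity, only the opposite bound $H_g\le -4$). Some concrete mechanism---for instance, comparing the induced metric on a holomorphic disc in $\mathcal{M}$ tangent to $v$ with the Poincar\'e metric of the Teichm\"uller disk through $(f(p),df(v))$, using that $f$ preserves distances---has to be supplied rather than named.

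The second gap is the extension argument for the non-complete case, which you do not flag. The map $F(\exp_{p_0}(v))=\exp_{q_0}(df_{p_0}(v))$ is defined by radial transport along real Teichm\"uller geodesics; there is no ``standard rigidity'' guaranteeing that such an extension is holomorphic, and the Teichm\"uller geodesic flow is not a holomorphic object, so the resulting $F$ has no reason to satisfy the hypotheses of Theorem~\ref{thm:bsds:intro} (holomorphy and $dF$ isometric at every point, not just at $p_0$). Nor is it clear that $F$ agrees with $f$ where both are defined when $\mathcal{M}$ is a proper piece of $\mathbb{CH}^N$. Since Theorem~\ref{thm:balls} is proved using the asymptotic behavior of infinite geodesic rays, one cannot simply localize it to a metric ball either. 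The honest reading is that the corollary, like Theorem~\ref{thm:kaehler}, tacitly works with a complete $\mathcal{M}$; if you wish to drop completeness, the extension step needs a genuine argument, not an appeal to Riemannian folklore.
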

\subsection*{Questions} We conclude with two open questions.

\noindent\textbf{1.} Is there a holomorphic map $f: (\mathcal{M},g) \rightarrow
\T_{g,n}$ from a Hermitian manifold with $\text{dim}_{\C}\mathcal{M} \geq 2$ such that
$df$ is an isometry on tangent spaces?

\noindent\textbf{2.} Is there a \textit{round} complex two-dimensional
linear slice in $T_X\T_{g,n}$?

\noindent\textit{Theorems \ref{thm:bsds:intro} and \ref{thm:kaehler}
  suggest that the answers to both
  questions are negative.}~\\

\subsection*{Notes and References}~\\
For an introduction to Teichm\"uller spaces, we refer to
~\cite{Gardiner:Lakic:book} and ~\cite{Hubbard:book:T1}; for an
introduction to symmetric spaces and their intrinsic Kobayashi metric,
we refer to ~\cite{Helgason:book:dglgss},~\cite{Satake:book:symmetric}
and ~\cite{Kobayashi:book:hyperbolic}, respectively. We note that the
Kobayashi metric of a bounded symmetric domain $\mathcal{B}$ does not
coincide with its Hermitian symmetric metric, unless it has rank one
ie. $\mathcal{B}\cong \mathbb{CH}^{N}$.

In his pioneering paper~\cite{Royden:metric}, H. L. Royden showed that
the Kobayashi metric of $\T_{g,n}$ coincides with its classical
Teichm\"uller metric and, using this result, he proved that, when
$\text{dim}_{\C}\T_{g}\geq 2$, the group of holomorphic automorphisms
$\text{Aut}(\T_{g})$ is discrete; hence, in particular, $\T_{g,n}$ is
not a symmetric domain. A proof that $\text{Aut}(\T_{g,n})$ is
discrete for all finite-dimensional Teichm\"uller spaces of dimension
two or more is given in~\cite{Earle:Markovic:isometries}.

The existence of isometrically immersed curves, known as Teichm\"uller
curves, in $\M_{g,n}$ has far-reaching applications in the dynamics of
billiards in rational polygons.~\cite{Veech:triangles},~\cite{McMullen:bild}
Corollary~\ref{cor-symmetric} shows that there are no higher
dimensional, locally symmetric, analogues of Teichm\"uller curves.

\subsection*{Acknowledgments}
I wish to thank my thesis advisor, Curtis T. McMullen, for many
insightful discussions which set off the train of thought leading to
the main result in this paper.




\addtocontents{toc}{\protect\setcounter{tocdepth}{1}}

\section{Preliminaries}\label{sec:prelim}
Let $\T_{g,n}$ denote the Teichm\"uller space of marked Riemann
surfaces of genus $g$ with $n$ punctures; it is the orbifold universal
cover of the moduli space of curves $\M_{g,n}$ and it is naturally a
complex manifold of dimension $3g-3+n$. It is known that Teichm\"uller
space can be realized as a (contractible) bounded domain
$\T_{g,n} \subset\C^{3g-3+n}$, by the Bers embeddings.~\cite{Bers:ts:survey}

Let $\mathcal{B}\subset\C^{N}$ be a bounded domain; we call
$\mathcal{B}$ a bounded symmetric domain if every point
$p\in \mathcal{B}$ is an \textit{isolated} fixed point of a
holomorphic involution
$\sigma_{p} : \mathcal{B}\rightarrow \mathcal{B}$, with
$\sigma_{p}^2=\text{id}_{\mathcal{B}}$. Bounded symmetric domains are
contractible and homogeneous as complex manifolds. It is classically
known that all Hermitian symmetric spaces of non-compact type can be
realized as bounded symmetric domains $\mathcal{B}\subset\C^{N}$, by
the Harish-Chandra embeddings.~\cite{Helgason:book:dglgss}

The unit disk $\Delta\cong\{~z\in \C : |z| < 1~\} $ is a bounded
symmetric domain; in fact, it is the unique (up to isomorphism)
contractible bounded domain of complex dimension one. We denote by
$\mathbb{CH}^1$ the unit disk equipped with its Poincar\'e metric
$|dz|/(1-|z|^2)$ of constant curvature $-4$, which we will refer to as
the complex hyperbolic line. Schwarz lemma shows that every
holomorphic map $f:\mathbb{CH}^1\rightarrow \mathbb{CH}^1$ is
non-expanding.
\subsection*{The Kobayashi metric}\label{sec:kobayashi-metric}~\cite{Kobayashi:book:hyperbolic}
Let $\mathcal{B}\subset\C^{N}$ be a bounded domain, its intrinsic
Kobayashi metric is the \textit{largest} complex Finsler metric such
that every holomorphic map $f: \mathbb{CH}^1 \rightarrow \mathcal{B}$
is non-expanding: $||f'(0)||_{\mathcal{B}}\leq 1$. It determines both
a family of norms $||\cdot||_{\mathcal{B}}$ on the tangent bundle
$T\mathcal{B}$ and a distance $d_{\mathcal{B}}(\cdot,\cdot)$ on pairs
of points.

The Kobayashi metric has the fundamental property that every
holomorphic map between complex domains is non-expanding; in
particular, every holomorphic automorphism is an isometry. The
Kobayashi metric of complex domain depends only on its structure as a
complex manifold.

\begin{examples}~\\
  1. $\mathbb{CH}^1$ realises the unit disk $\Delta$ with its
  Kobayashi metric. The Kobayashi metric on the unit ball
  $\mathbb{CH}^2\cong\{~~(z,w)~~ | ~~ |z|^2 + |w|^2 < 1~~ \} \subset
  \C^2$ coincides with its unique (complete) invariant Ka\"ehler metric
  of constant holomorphic curvature -4.~\\ 
  2. The Kobayashi metric on the bi-disk $\mathbb{CH}^1\times\mathbb{CH}^1$ 
  coincides with the sup-metric of the two factors. It is a complex Finsler metric; 
  it is not a Hermitian metric.~\\
  3. The Kobayashi metric on $\T_{g,n}$ coincides with the classical
  Teichm\"uller metric, which endows $\T_{g,n}$ with the structure of
  a complete geodesic metric space.
\end{examples}
Incidentally, examples 1 and 2 above describe all bounded symmetric
domains up to isomorphism in complex dimensions one and two. We
discuss example 3 in more detail below.~\\
\subsection*{Teichm\"uller space}~\cite{Gardiner:Lakic:book},~\cite{Hubbard:book:T1}
Let $\Sigma_{g,n}$ be a connected, oriented surface of genus $g$ and
$n$ punctures and $\T_{g,n}$ denote the Teichm\"uller space of Riemann
surfaces marked by $\Sigma_{g,n}$. A point in $\T_{g,n}$ is specified
by an orientation preserving homeorphism
$\phi: \Sigma_{g,n} \rightarrow X$ to a Riemann surface of finite
type, up to a natural equivalence relation\footnote{Two marked Riemann
  surfaces $ \phi: \Sigma_{g,n} \rightarrow X$,
  $\psi: \Sigma_{g,n}\rightarrow Y$ are equivalent if
  $\psi \circ {\phi}^{-1}: X \rightarrow Y$ is isotopic to a
  holomorphic bijection.}.

Teichm\"uller space $\T_{g,n}$ is naturally a complex manifold of
dimension $3g-3+n$ and forgetting the marking realises $\T_{g,n}$ as
the complex orbifold universal cover of the moduli space $\M_{g,n}$.
When it is clear from the context we often denote a point specified by
$\phi: \Sigma_{g,n} \rightarrow X$ simply by $X$.~\\

\subsection*{Quadratic differentials}
For each $X \in \T_{g,n}$, we let $Q(X)$ denote the space of
holomorphic quadratic differentials $q=q(z)(dz)^2$ on $X$ with finite
total mass: $ ||q||_{1} = \int_{X} |q(z)||dz|^2 < +\infty$, which
means that $q$ has at worse simple poles at the punctures of $X$.

The tangent and cotangent spaces to Teichm\"uller space at
$X\in \T_{g,n}$ are described in terms of the natural pairing
$ (q,\mu) \mapsto \int_{X} q\mu$ between the space $Q(X)$ and the
space $M(X)$ of $L^{\infty}$-measurable Beltrami differentials on $X$;
in particular, the tangent $T_{X} \T_{g,n}$ and cotangent
$T_{X}^{*} \T_{g,n}$ spaces are naturally isomorphic to
$M(X)/Q(X)^{\perp}$ and $Q(X)$, respectively.

The Teichm\"uller-Kobayashi metric on $\T_{g,n}$ is given by norm
duality on the tangent space $T_{X}\T_{g,n}$ from the norm
$||q||_{1} = \int_{X} |q|$ on the cotangent space $Q(X)$ at $X$. The
corresponding distance function is given by the formula
$d_{\T_{g,n}}(X,Y) = \inf \frac{1}{2} \log K(\phi)$ and measures the
minimal dilatation $K(\phi)$ of a quasiconformal map
$\phi: X \rightarrow Y$ respecting their markings.

We denote by $Q\T_{g,n} \cong T^{*}\T_{g,n}$ the complex vector-bundle
of holomorphic quadratic differentials over $\T_{g,n}$ and by
$Q_1\T_{g,n}$ the associated sphere-bundle of quadratic differentials
with unit mass. There is a natural norm-preserving action of
$\text{PSL}_2(\mathbb{R})$ on $Q\T_{g,n}$, with the diagonal matrices
giving the geodesic flow for the Teichm\"uller-Kobayashi metric of
$\T_{g,n}$. For each $(X,q)\in Q_1\T_{g,n}$, the orbit
$\text{PSL}_2(\mathbb{R}) \cdot (X,q) \subset Q_1\T_{g,n}$ induces a
holomorphic totally-geodesic isometry
$\mathbb{CH}^1 \cong
\text{SO}_2(\mathbb{R})\setminus\text{PSL}_2(\mathbb{R})
\hookrightarrow \T_{g,n}$,
which we refer to as the \textit{Teichm\"uller disk} generated by
$(X,q)$.~\\

\subsection*{Measured foliations}~\cite{FLP} 
Let $\mathcal{MF}_{g,n}$ denote the space of equivalent
classes\footnote{Two measured foliations $\mathcal{F},\mathcal{G}$ are
  equivalent $\mathcal{F}\thicksim \mathcal{G}$ if they differ by a
  finite sequence of Whitehead moves followed by an isotopy of
  $\Sigma_{g,n}$, preserving their transverse measures.~\cite{FLP}} of
nonzero (singular) measured foliations on $\Sigma_{g,n}$. It is known
that $\mathcal{MF}_{g,n}$ has the structure of a \textit{piecewise
  linear} manifold, which is homeomorphic to
$\mathbb{R}^{6g-6+2n}\setminus{\{0\}}$.

The geometric intersection number of a pair of measured foliations
$\mathcal{F},\mathcal{G}$, denoted by $i(\mathcal{F},\mathcal{G})$,
induces a continuous map
$i(\cdot,\cdot): \mathcal{MF}_{g,n} \times \mathcal{MF}_{g,n}
\rightarrow \mathbb{R}_{\geq 0}$,
which extends the geometric intersection pairing on the space of
(isotopy classes of) simple closed curves on
$\Sigma_{g,n}$.~\cite{Bonahon:currents}

Given $\mathcal{F} \in \mathcal{MF}_{g,n}$ and $X \in \T_{g,n}$, we
let $\lambda(\mathcal{F},X)$ denote the \textit{extremal length} of
$\mathcal{F}$ on the Riemann surface $X$ given by the formula
$\lambda(\mathcal{F},X)= \sup
\frac{\ell_{\rho}(\mathcal{F})^2}{\text{area}(\rho)}$,
where $\ell_{\rho} (\mathcal{F})$ denotes the $\rho$-length of
$\mathcal{F}$ and the supremum is over all (Borel-measurable)
conformal metrics $\rho$ of finite area on $X$.

Each nonzero quadratic differential $q \in Q(X)$ induces a conformal
metric $|q|$ on $X$, which is non-singular of zero curvature away from 
the zeros of $q$, and a measured foliation $\mathcal{F}(q)$ tangent to
vectors $v=v(z)\frac{\partial}{\partial z}$ with $q(v)=q(z)(v(z))^2 <0$. 
The transverse measure of the foliation $\mathcal{F}(q)$ is (locally)
given by integrating $|\text{Re}(\sqrt{q})|$ along arcs transverse to
its leaves.

We refer to $\mathcal{F}(q)$ as the vertical measured foliation
induced from $(X,q)$. In local coordinates, where $q=dz^2$ (such
coordinates exist away from the zeros of $q$), the metric $|q|$
coincides with the Euclidean metric $|dz|$ in the plane and the
measured foliation $\mathcal{F}(q)$ has leaves given by vertical lines
and transverse measure by the total horizontal variation
$|\text{Re}(dz)|$. We note that the measured foliation
$\mathcal{F}(-q)$ has (horizontal) leaves orthogonal to
$\mathcal{F}(q)$ and the product of their transverse measures is just
the area form of the conformal metric $|q|$ induced from $q$.

When it is clear from the context we often identify the measured
foliation $\mathcal{F}(q)$ with its equivalence class in
$\mathcal{MF}_{g,n}$. The following fundamental theorem will be used
in the next section.
\begin{thm}(\cite{Hubbard:Masur:fol};Hubbard-Masur)\label{thm:hubbard:masur}
  Let $X \in \T_{g,n}$; the map $q \mapsto \mathcal{F}(q)$ induces a
  homeomorphism $Q(X) \setminus \{0\} \cong \mathcal{MF}_{g,n}$.
  Moreover, $|q|$ is the unique extremal metric for $\mathcal{F}(q)$
  on $X$ and its extremal length is given by the formula
  $\lambda(\mathcal{F},X) = ||q||_1$.
\end{thm}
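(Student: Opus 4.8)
The plan is to first pin down the analytic content---that $|q|$ is extremal for $\mathcal{F}(q)$ with $\lambda(\mathcal{F}(q),X)=||q||_1$ and is the \emph{unique} extremal metric---and then bootstrap this to injectivity and, via invariance of domain, to the homeomorphism. Fix $X$ and $q\in Q(X)\setminus\{0\}$ and work in the natural coordinate $z=x+iy$ with $q=dz^2$ away from the zeros of $q$, so that $|q|=|dz|$ is flat, the leaves of $\mathcal{F}(q)$ are vertical, and the transverse measure is $|dx|$. For a competing Borel conformal metric $\rho=\rho(z)|dz|$ one has $\mathrm{area}(\rho)=\int_X\rho^2\,dx\,dy$ and $\mathrm{area}(|q|)=\int_X dx\,dy=||q||_1$, while the $\rho$-length of the foliation, obtained by integrating the $\rho$-length along leaves against the transverse measure, satisfies $\ell_\rho(\mathcal{F}(q))\le\int_X\rho\,dx\,dy$. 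Cauchy--Schwarz then gives
\[ \ell_\rho(\mathcal{F}(q))^2\le\Big(\int_X\rho\,dx\,dy\Big)^2\le\Big(\int_X\rho^2\,dx\,dy\Big)\Big(\int_X dx\,dy\Big)=\mathrm{area}(\rho)\cdot||q||_1, \]
so $\lambda(\mathcal{F}(q),X)\le||q||_1$, with equality throughout when $\rho=|q|$ (whose vertical leaves are $|q|$-geodesics). Hence $\lambda(\mathcal{F}(q),X)=||q||_1$, and the equality case of Cauchy--Schwarz forces any extremal $\rho$ to be a constant multiple of $|q|$, yielding uniqueness of the extremal metric.

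Injectivity then follows cheaply. If $\mathcal{F}(q_1)=\mathcal{F}(q_2)$, then $|q_1|$ and $|q_2|$ are both the unique extremal metric of this common foliation, so $|q_1(z)|=|q_2(z)|$ pointwise. Matching vanishing orders shows the zeros coincide with multiplicity, so $h=q_1/q_2$ is a nowhere-zero holomorphic function with $|h|\equiv 1$, hence constant: $q_1=c\,q_2$ with $|c|=1$. Finally, $\mathcal{F}(c\,q_2)$ is cut out by $c\,q_2(v)<0$, which agrees with $\mathcal{F}(q_2)=\{\,v:q_2(v)<0\,\}$ only when $c$ is positive real; with $|c|=1$ this forces $c=1$, so $q_1=q_2$.

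For the homeomorphism I would exploit the scaling equivariance $\mathcal{F}(t^2q)=t\,\mathcal{F}(q)$ to pass to unit spheres: the map descends to a continuous $\bar f$ from $S(Q(X))=\{\,||q||_1=1\,\}\cong S^{6g-7+2n}$ to $\mathcal{PMF}_{g,n}\cong S^{6g-7+2n}$, both compact topological manifolds of the same dimension (using the stated identification $\mathcal{MF}_{g,n}\cong\R^{6g-6+2n}\setminus\{0\}$). Injectivity of $\bar f$ follows from the injectivity above together with the equivariance. By invariance of domain $\bar f$ is an open map, so its image is open; being a continuous image of a compact space it is also closed; and since $\mathcal{PMF}_{g,n}$ is connected, $\bar f$ is therefore surjective. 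A continuous bijection from a compact space to a Hausdorff space is a homeomorphism, and undoing the projectivization via the equivariant $\R_{>0}$-action upgrades this to the homeomorphism $Q(X)\setminus\{0\}\cong\mathcal{MF}_{g,n}$.

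The main obstacle is the analytic input underlying the length--area step: one must give a rigorous meaning to $\ell_\rho(\mathcal{F}(q))$ for a measured foliation whose leaves need not be closed---controlling the singular leaves and the attendant measure-theoretic subtleties---and verify the two inequalities above for \emph{all} Borel competitor metrics $\rho$, not merely smooth ones. The remaining technical point, needed even to run the topological argument, is continuity of $q\mapsto\mathcal{F}(q)$ across the discriminant locus where zeros of $q$ collide, which is precisely where the Whitehead moves in the definition of $\mathcal{MF}_{g,n}$ intervene; I would establish it from the continuous dependence of the transverse measure $|\mathrm{Re}(\sqrt{q})|$ on $q$ together with the PL structure on $\mathcal{MF}_{g,n}$.
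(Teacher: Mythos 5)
The paper does not prove this statement: it is quoted from Hubbard--Masur \cite{Hubbard:Masur:fol} and used as a black box, so there is no internal proof to compare yours against. On its own terms, your architecture --- length--area for the extremal-length formula and uniqueness of the extremal metric, then injectivity, then surjectivity via invariance of domain on the projectivized spheres --- is a legitimate and known route to the theorem (close in spirit to Kerckhoff's and Gardiner's extremal-length treatments), and the topological endgame is sound once one grants Thurston's identification $\mathcal{MF}_{g,n}\cong\R^{6g-6+2n}\setminus\{0\}$ together with continuity and injectivity of the map.

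Two gaps, however, are more than technical. First, your length--area chain only yields the upper bound $\lambda(\mathcal{F}(q),X)\le \|q\|_1$; the parenthetical ``equality holds throughout when $\rho=|q|$ (whose vertical leaves are $|q|$-geodesics)'' conceals the hard direction, namely that $\ell_{|q|}(\mathcal{F}(q))=\|q\|_1$, i.e.\ that no representative of the \emph{equivalence class} (after Whitehead moves and isotopy) has smaller $|q|$-length than the vertical foliation itself. Leaves being geodesics does not give this; one needs a minimal-norm/heights argument, typically via the estimate $\ell_{|q|}(\mathcal{G})\ge i(\mathcal{G},\mathcal{F}(-q))$ for every representative $\mathcal{G}$ of the class. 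Without it, neither the formula $\lambda(\mathcal{F}(q),X)=\|q\|_1$ nor the uniqueness of the extremal metric --- which your injectivity step consumes --- is established. Second, your exclusion of $c=e^{i\theta}$ with $\theta\ne 0$ compares the literal foliations (cut out by $cq_2(v)<0$ versus $q_2(v)<0$) rather than their classes in $\mathcal{MF}_{g,n}$; distinct foliations can be equivalent, so you must rule out equivalence, e.g.\ by computing $i(\mathcal{F}(e^{i\theta}q),\mathcal{F}(q))=|\sin(\theta/2)|\,\|q\|_1>0$ while $i(\mathcal{F},\mathcal{F})=0$ for any class. Finally, the continuity of $q\mapsto\mathcal{F}(q)$ across the locus where zeros collide, which you flag but defer, is genuinely needed before invariance of domain can be invoked. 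The skeleton is right, but the analytic core of Hubbard--Masur remains outstanding in your write-up.
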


\addtocontents{toc}{\protect\setcounter{tocdepth}{1}}


\section{Extremal length geometry}\label{sec:balls}

Let $\mathbb{CH}^2\cong\{~~(z,w)~~ | ~~ |z|^2 + |w|^2 < 1~~ \} \subset
\C^2$ denote the complex hyperbolic plane, realized as the round unit
ball with its Kobayashi metric. In this section we will use measured
foliations and extremal length on Riemann surfaces to prove:
\begin{thm}\label{thm:balls}
  There is no holomorphic isometry $f: \mathbb{CH}^2 \hookrightarrow
  \T_{g,n}$ for the Kobayashi metric.
\end{thm}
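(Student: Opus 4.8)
The plan is to show that a holomorphic isometry $f: \mathbb{CH}^2 \hookrightarrow \T_{g,n}$ cannot exist by analyzing how the Kobayashi unit ball of $\mathbb{CH}^2$ — which is round — would have to embed isometrically into the unit ball of $T_X\T_{g,n}$, which is governed by extremal length. The key structural fact I would exploit is the contrast in the geometry of geodesics and the shape of the unit balls: in $\mathbb{CH}^2$ the Kobayashi metric is the Bergman metric of constant holomorphic sectional curvature $-4$, so every complex direction generates a totally geodesic $\mathbb{CH}^1$ and, crucially, any two distinct Teichm\"uller (complex-geodesic) disks through a common point $p$ meet \emph{only} at $p$ and their tangent directions are unconstrained — the space of complex geodesics is as homogeneous as possible. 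By contrast, in $\T_{g,n}$ a Teichm\"uller disk through $X$ is determined by a quadratic differential $q \in Q(X)$, and the interaction of two such disks is rigidly controlled by the measured foliations $\mathcal{F}(q)$ via the intersection pairing $i(\cdot,\cdot)$ and extremal length.

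Concretely, first I would pass to the infinitesimal picture at a point. Using that $df$ is a linear isometry $T_p\mathbb{CH}^2 \to T_X\T_{g,n}$ for the Kobayashi norms, the image is a complex two-dimensional linear slice $V \subset T_X\T_{g,n} \cong M(X)/Q(X)^\perp$ on which the Kobayashi (Teichm\"uller) norm restricts to the round Hermitian norm of $\mathbb{CH}^2$. Dually, via the pairing with $Q(X)$ and the Hubbard-Masur theorem (Theorem~\ref{thm:hubbard:masur}), the extremal length function $q \mapsto \lambda(\mathcal{F}(q),X) = \|q\|_1$ restricted to the relevant subspace of cotangent directions must agree with a round ($\ell^2$-type) norm. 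This is the crux: I would argue that the $L^1$-mass norm $\|q\|_1 = \int_X |q|$ on quadratic differentials, together with the convexity/duality properties of extremal length along Teichm\"uller geodesics, is incompatible with being a smooth strictly-convex round Hermitian norm on a two-complex-dimensional slice. The mechanism is that the unit sphere of the Teichm\"uller norm has "flat" directions or non-round convexity coming from the additivity $i(\mathcal{F},\mathcal{G})$ and the way vertical foliations of different differentials overlap — in particular a round two-dimensional slice would force an unexpected rigidity on the associated family of extremal metrics $|q|$.

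Second, I would globalize using geodesics. Since $f$ is a Kobayashi isometry and both spaces have Kobayashi metrics realized by complex geodesics (Teichm\"uller disks on one side, totally geodesic $\mathbb{CH}^1$'s on the other), $f$ must send each complex geodesic of $\mathbb{CH}^2$ to a Teichm\"uller disk, hence send the $\mathbb{CH}^1$ through $p$ in direction $v$ to the Teichm\"uller disk generated by the corresponding $(X,q_v)$. I would then track how the foliation $\mathcal{F}(q_v)$ varies as $v$ ranges over the round sphere of complex directions in $T_p\mathbb{CH}^2$, and use the continuity and the intersection-number geometry on $\mathcal{MF}_{g,n}$ to derive a contradiction with the $\text{PU}(2,1)$-homogeneity of the family of complex geodesics in $\mathbb{CH}^2$.

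The hard part, and the place where I expect the real work to concentrate, is proving the infinitesimal incompatibility: that no complex two-plane in $(Q(X), \|\cdot\|_1)$ can carry the round norm of $\mathbb{CH}^2$. This is essentially a statement that the $L^1$-norm on holomorphic quadratic differentials, pulled back through the Hubbard-Masur parametrization, never restricts to a (smooth, rotation-invariant) Euclidean norm on a genuinely two-dimensional complex slice — equivalently, that $\T_{g,n}$ admits no round complex two-dimensional linear slice in $T_X\T_{g,n}$, which is precisely the open question flagged in the introduction. I would attack this by examining the second-order behavior of $\lambda(\mathcal{F}(q),X)$ and the failure of the parallelogram law for $\|\cdot\|_1$ along a pair of transverse foliations, producing an explicit obstruction from the geometry of how $|\mathrm{Re}(\sqrt{q})|$ adds up for differing $q$. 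Everything else — the reduction to the tangent space, the identification of images of complex geodesics with Teichm\"uller disks, and the globalization — should follow from the isometry hypothesis and the structural facts recorded in Section~\ref{sec:prelim}.
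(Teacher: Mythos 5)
Your strategy founders at exactly the point you yourself flag as ``the hard part'': the claim that no complex two-plane in $(Q(X),\|\cdot\|_1)$ --- equivalently, no complex two-dimensional linear slice of $T_X\T_{g,n}$ --- can carry the round norm of $\mathbb{CH}^2$. This is not a lemma you can expect to extract from convexity or parallelogram-law considerations; it is precisely Question~2 in the introduction, which the author explicitly leaves \emph{open} and says is merely ``suggested'' to have a negative answer by the theorems proved. Your proposal therefore reduces the theorem to an unsolved problem rather than to known facts, and the mechanisms you sketch (failure of the parallelogram law for $\|\cdot\|_1$, ``flat directions'' of the unit sphere, second-order behavior of $\lambda(\mathcal{F}(q),X)$) are not developed anywhere in the paper and do not obviously rule out an arbitrary round slice.

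The paper's actual proof avoids the infinitesimal question entirely and is genuinely global. By Slodkowski's theorem such an $f$ would be totally geodesic and distance-preserving. One picks two orthogonal geodesic rays at $X=f((0,0))$ generated by $q,p\in Q_1(X)$, arranged (after an automorphism of $\mathbb{CH}^2$) so that $\mathcal{F}(q)$ and $\mathcal{F}(p)$ are uniquely ergodic, and uses Masur's asymptotic criterion (Theorem~\ref{thm:masur:ue}) to show $f$ carries complex hyperbolic horocycles to extremal length horocycles. Intersecting the geodesic $\delta$ joining the two ideal endpoints with these horocycles pins down $i(\mathcal{F}(q),\mathcal{F}(p))=e^{t_1-t_2}=\tfrac{1}{2}$, and the rotational symmetry $(z,w)\mapsto(e^{-i\theta}z,w)$ of $\mathbb{CH}^2$ then forces $i(\mathcal{F}(e^{i\theta}q),\mathcal{G})$ to be constant in $\theta$. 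This contradicts Proposition~\ref{prop:intersection}, which exhibits this function explicitly as $\sum_{i=1}^{N}|\mathrm{Re}(e^{i\theta/2}v_i)|$ via saddle connections of the flat metric $|q|$ --- a function that is never constant. Your second step gestures at this kind of globalization (tracking $\mathcal{F}(q_v)$ over the sphere of directions), but it lacks all three essential ingredients --- the horocycle identification via unique ergodicity, the quantitative computation of the intersection number, and the explicit $\theta$-dependence --- without which no contradiction materializes.
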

\subsection*{Outline of the proof}
The proof leverages the fact that extremal length provides a link
between the geometry of Teichm\"uller geodesics and the geometric
intersection pairing for measured foliations.

By a theorem of Slodkowski
~\cite{Slodkowski:motions},~\cite{Earle:Markovic:isometries}, we
deduce that such an isometry would be totally-geodesic, it would send
real geodesics in $\mathbb{CH}^2$ to Teichm\"uller geodesics in
$\T_{g,n}$, preserving their length.  By
Theorem~\ref{thm:hubbard:masur}, we can parametrize the set of
Teichm\"uller geodesic rays from any base point $X\in\T_{g,n}$ by the
subspace of measured foliations $\mathcal{F}\in\mathcal{MF}_{g,n}$
with extremal length $\lambda(\mathcal{F},X)=1$.

Assuming the existence of $f$, we consider pairs of measured
foliations that parametrize orthogonal geodesic rays in the image of a
totally real geodesic hyperbolic plane $\mathbb{RH}^2\subset\mathbb{CH}^2$. 
We obtain a contradiction by computing their geometric intersection
number in two different ways.

On the one hand, we use the geometry of complex hyperbolic horocycles
and extremal length to show that the geometric intersection number
does not depend on the choice of the totally real geodesic plane. On
the other hand, by a direct geometric argument we show that this is
impossible. More precisely, we have:
\begin{prop}\label{prop:intersection}
  Let $q\in Q_1\T_{g,n}$ and $\mathcal{G}\in \mathcal{MF}_{g,n}$.
  There exist $v_1, \ldots, v_N \in \C^{*}$ such that
  $i(\mathcal{F}(e^{i\theta}q),\mathcal{G})=\sum_{i=1}^{N} |
  \text{Re}(e^{i\theta/2}v_i)|$ for all $\theta \in
  \mathbb{R}/2\pi\mathbb{Z}$.
\end{prop}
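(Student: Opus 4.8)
The plan is to work entirely in the fixed singular flat metric $|q|$ on the surface $X$ underlying $q$, exploiting the fact that passing from $q$ to $e^{i\theta}q$ rotates the vertical foliation while leaving the metric $|q|$ unchanged. Writing $dz=\sqrt{q}$ for the natural coordinate of $q$ (defined away from its zeros and poles), the transverse measure of $\mathcal{F}(e^{i\theta}q)$ is $|\text{Re}(\sqrt{e^{i\theta}q})|=|\text{Re}(e^{i\theta/2}dz)|$. First I would reduce the computation of $i(\mathcal{F}(e^{i\theta}q),\mathcal{G})$ to a single integral of this transverse measure over one well-chosen representative of $\mathcal{G}$, the same representative serving all $\theta$ at once.

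Concretely, I would represent $\mathcal{G}$ by its geodesic representative in the flat metric $|q|$: a measured geodesic lamination whose leaves are straight segments in the natural coordinate, bending only at the cone points (the zeros and poles of $q$). Cutting $X$ along the finitely many singular leaves, those that run into a cone point, decomposes the support of $\mathcal{G}$ into finitely many Euclidean bands $R_1,\dots,R_N$, each foliated by a parallel family of straight leaf segments. Each band $R_b$ carries two invariants: a displacement vector $u_b=\int dz\in\C^{*}$ recording the leaf direction and flat length across the band, and a width $w_b>0$ equal to the transverse measure of $\mathcal{G}$ it carries. Since there are finitely many cone points, each with finitely many prongs, there are finitely many singular leaves and hence finitely many bands; this is the source of the finite index set in the statement.

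The key step, and the one I expect to be the main obstacle, is to show that this flat representative is simultaneously taut for every rotated foliation, so that it computes the geometric intersection number rather than merely bounding it from above. Here I would invoke the no-bigon criterion: the leaves of $\mathcal{F}(e^{i\theta}q)$ are themselves straight lines in the natural coordinate, and away from the punctures the flat metric has cone angles at least $2\pi$, so two straight geodesics bound no bigon; hence the chosen representative and $\mathcal{F}(e^{i\theta}q)$ are in minimal position for all $\theta$ at once. The behaviour at the simple poles, where the cone angle is $\pi$, is the delicate point, and I would treat it by a separate local analysis, or by approximating $\mathcal{G}$ by weighted simple closed curves disjoint from the punctures and passing to the limit using continuity of $i(\cdot,\cdot)$ and of flat straightening. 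Granting tautness, $i(\mathcal{F}(e^{i\theta}q),\mathcal{G})=\int|\text{Re}(e^{i\theta/2}dz)|$, the integral taken over the geodesic representative of $\mathcal{G}$.

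It then remains to evaluate this integral band by band. Along a single straight leaf segment in $R_b$ the differential $dz$ points in the fixed direction $u_b/|u_b|$, so $\text{Re}(e^{i\theta/2}dz)$ keeps a constant sign and the segment contributes $|\text{Re}(e^{i\theta/2}u_b)|$; integrating across the width $w_b$ multiplies this by $w_b$. Setting $v_b=w_b\,u_b\in\C^{*}$ and summing over the $N$ bands yields $i(\mathcal{F}(e^{i\theta}q),\mathcal{G})=\sum_{b=1}^{N}|\text{Re}(e^{i\theta/2}v_b)|$ for every $\theta$, as desired. As a consistency check, replacing $\theta$ by $\theta+2\pi$ flips the sign of $e^{i\theta/2}$ but not of the summands, matching the fact that $e^{i\theta}q$ and $\mathcal{F}(e^{i\theta}q)$ are $2\pi$-periodic in $\theta$, so the formula is well defined on $\R/2\pi\Z$.
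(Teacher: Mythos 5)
Your proposal is correct in its core mechanism and matches the paper's: both compute $i(\mathcal{F}(e^{i\theta}q),\mathcal{G})$ by integrating the rotated transverse measure $|\mathrm{Re}(e^{i\theta/2}\sqrt{q})|$ over a representative of $\mathcal{G}$ that is geodesic for the flat metric $|q|$, using the fact that $|e^{i\theta}q|=|q|$ so a single representative serves all $\theta$. Where you genuinely diverge is in handling a general $\mathcal{G}\in\mathcal{MF}_{g,n}$: you attack it directly, via a band decomposition of the flat geodesic representative of an arbitrary measured foliation, whereas the paper first restricts to $\mathcal{G}$ a simple closed curve (and $q$ without poles), where the geodesic representative is simply a finite union of saddle connections with holonomy vectors $v_i$, and then extends to all of $\mathcal{MF}_{g,n}$ by density of weighted simple closed curves and continuity of $i(\cdot,\cdot)$. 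Your route buys a more uniform statement but at a real cost: the existence and finite-band structure of flat geodesic representatives for arbitrary measured laminations (Cantor-transversal minimal components, constant displacement vector per band, behaviour at cone points of angle $\pi$) is a nontrivial piece of structure theory that you assert rather than prove, and it is precisely the part the paper's reduction is designed to avoid. If you instead fall back on the approximation argument (which you mention only for the pole issue), be aware of one ingredient you omit and the paper makes explicit: the number $N$ of saddle connections is bounded by a constant depending only on the topology of the surface. Without that uniform bound, a limit of functions $\theta\mapsto\sum_{i=1}^{N_k}|\mathrm{Re}(e^{i\theta/2}v_i^{(k)})|$ with $N_k\to\infty$ need not again be a \emph{finite} sum of the required form, so the continuity argument would not close.
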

The proof of the proposition is given at the end of the section.\qed

See \S~\ref{sec:prelim} for background material in Teichm\"uller
theory and notation.

\subsection*{Complex hyperbolic horocycles}
Let $\gamma: [0,\infty) \rightarrow \mathbb{CH}^2$ be a geodesic ray
with unit speed. Since $\mathbb{CH}^2$ is a homogeneous space, we have
$\gamma=\alpha \circ \gamma_1$, where $\gamma_1(t) = (\tanh(t),0)$,
for $t\geq 0$, and $\alpha$ is a holomorphic isometry of
$\mathbb{CH}^2$. Each geodesic ray is contained in the image of unique
holomorphic totally-geodesic isometry
$\gamma: \mathbb{CH}^1 \hookrightarrow \mathbb{CH}^2$ satisfying
$\gamma(t)=\phi(\tanh(t))$; in particular, $\phi_1 (z) = (z,0)$, for
$z\in \Delta\cong \mathbb{CH}^1$. We note that every complex geodesic
$\phi: \mathbb{CH}^1 \hookrightarrow \mathbb{CH}^2$ arises uniquely
(up to pre-composition with an automorphism of $\mathbb{CH}^1$) as the
intersection of the unit ball in $\mathbb{C}^2$ with a complex affine
line.

Associated to each geodesic ray
$\gamma: [0,\infty) \rightarrow \mathbb{CH}^2$ is a pair of transverse
foliations of $\mathbb{CH}^2$, one by real geodesics asymptotic to
$\gamma$ and another by complex hyperbolic horocycles asymptotic to
$\gamma$. For each $p\in \mathbb{CH}^2$ there exists a \textit{unique}
geodesic $\gamma_p: \R \rightarrow \mathbb{CH}^2$ and a
\textit{unique} time $t_p\in \R$ such that $\gamma_p(t_p)=p$ and
$\displaystyle \lim_{t\rightarrow
  \infty}d_{\mathbb{CH}^2}(\gamma(t),\gamma_p(t))\rightarrow 0$.
For each $s\in \R_{+}$, we define the set
$H(\gamma,s)= \{~~ p \in \mathbb{CH}^2 ~~|~~ \exp(t_p)=s ~~\}$. The
collection of subsets $\{H(\gamma,s)\}_{s\in \mathbb{R}_{+}}$ defines
the foliation of $\mathbb{CH}^2$ by \textit{complex hyperbolic
horocycles} asymptotic to $\gamma$.

\subsection*{Extremal length horocycles}
Let $\gamma: [0,\infty) \rightarrow \T_{g,n}$ be a Teichm\"uller
geodesic ray with unit speed. It has a unique lift to
$\widetilde{\gamma}(t)=(X_t,q_t) \in Q_1\T_{g,n}$, such that
$\gamma(t)=X_t$ and $\widetilde{\gamma}(t) = \text{diag}(e^{t}, e^{-t})\cdot (X_0,q_0)$.
The map $q \mapsto (\mathcal{F}(q),\mathcal{F}(-q))$ gives an
embedding
$Q\T_{g,n} \hookrightarrow \mathcal{MF}_{g,n} \times
\mathcal{MF}_{g,n}$
which satisfies $||q||_1=i(\mathcal{F}(q),\mathcal{F}(-q))$ and sends
the lift $\widetilde{\gamma}(t)=(X_t,q_t)$ of Teichm\"uller geodesic
ray $\gamma$ to a path of the form
$(e^t\mathcal{F}(q),e^{-t}\mathcal{F}(-q))$.

The later description of a Teichm\"uller geodesic and
Theorem~\ref{thm:hubbard:masur} show that the extremal length of
$\mathcal{F}(q_t)$ along $\gamma$ satisfies
$\lambda(\mathcal{F}(q_t),X_s)=e^{2(t-s)}$ for all $t,s\in \R_{+}$,
which motivates the following definition. For each
$\mathcal{F} \in \mathcal{MF}_{g,n}$ the \textit{extremal length
  horocycles} asymptotic to $\mathcal{F}$ are the level-sets of
extremal length
$H(\mathcal{F},s) = \{~~ X \in \T_{g,n} ~~|~~ \lambda(\mathcal{F},X)=s
~~\}$
for $s\in \R_{+}$. The collection of subsets
$\{H(\mathcal{F},s)\}_{s\in \mathbb{R}_{+}}$ defines the foliation of
$\T_{g,n}$ by \textit{extremal length horocycles} asymptotic to
$\mathcal{F}$.

There is transverse foliation of $\T_{g,n}$ by real Teichm\"uller
geodesics with lifts $(X_t, q_t)$ that satisfy
$\mathcal{F}(q_{t}) \in \R_{+}\cdot \mathcal{F}$. One might expect
that this foliation of $\T_{g,n}$ is analogous to the foliation of
$\mathbb{CH}^2$ by geodesics that are positively asymptotic to
$\gamma$. Although this is not always true, it is true for
\textit{generic} measured foliations
$\mathcal{F} \in \mathcal{MF}_{g,n}$.
\begin{thm}\label{thm:masur:ue}(\cite{Masur:ergodic:geodesics};~H.~Masur)
  Let $(X_t,q_t)$ and $(Y_t,p_t)$ be two Teichm\"uller geodesics and
  $\mathcal{F}(q_0)\in\mathcal{MF}_{g,n}$ be uniquely
  ergodic.~\footnote{A measured foliation $\mathcal{F}$ is
    \textit{uniquely ergodic} if it is minimal and admits a unique, up
    to scaling, transverse measure; in particular,
    $i(\gamma,\mathcal{F}) > 0 $ for all simple closed curves
    $\gamma$. Compare with ~\cite{Masur:ergodic:geodesics}.} Then
  $lim_{t\rightarrow\infty}d_{\T_{g,n}}(X_t,Y_t)\rightarrow 0$ if and
  only if $\mathcal{F}(q_0)=\mathcal{F}(p_0)$ in $\mathcal{MF}_{g,n}$
  and $\lambda(\mathcal{F}(q_0),X_0)=\lambda(\mathcal{F}(p_0),Y_0)$.
\end{thm}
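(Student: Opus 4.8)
The plan is to convert the asymptotics of the Teichm\"uller distance into asymptotics of extremal length. I will use Kerckhoff's formula for the Teichm\"uller metric,
\[
 e^{2\,d_{\T_{g,n}}(X,Y)} = \sup_{\mathcal{G}\in\mathcal{MF}_{g,n}} \frac{\lambda(\mathcal{G},Y)}{\lambda(\mathcal{G},X)},
\]
whose ``$\le$'' direction is the elementary fact that a $K$-quasiconformal map distorts extremal length by at most $K$. Normalising the lifts so that $q_0,p_0\in Q_1\T_{g,n}$ and writing $\nu:=\mathcal{F}(q_0)$, the hypothesis $\lambda(\mathcal{F}(q_0),X_0)=\lambda(\mathcal{F}(p_0),Y_0)$ records that $\lambda(\nu,X_0)=1$ for both geodesics. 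By Kerckhoff's formula, $d_{\T_{g,n}}(X_t,Y_t)\to 0$ is then equivalent to $\lambda(\mathcal{G},X_t)/\lambda(\mathcal{G},Y_t)\to 1$ uniformly over the compact space $\mathbb{P}\mathcal{MF}_{g,n}$.

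The engine of the proof is a rescaling limit: for $\nu$ uniquely ergodic I claim that $\lim_{t\to\infty} e^{-2t}\,\lambda(\mathcal{G},X_t) = i(\nu,\mathcal{G})^2$ for every $\mathcal{G}\in\mathcal{MF}_{g,n}$. The lower bound is soft. The flat metric $|q_t|$ has area $\|q_t\|_1=1$, and a $\mathcal{G}$-geodesic has length at least its total horizontal variation, $\ell_{|q_t|}(\mathcal{G})\ge i(\mathcal{F}(q_t),\mathcal{G})=e^{t} i(\nu,\mathcal{G})$; since extremal length dominates $\ell_\rho(\mathcal{G})^2/\mathrm{area}(\rho)$ for every $\rho$, this gives $\lambda(\mathcal{G},X_t)\ge e^{2t} i(\nu,\mathcal{G})^2$. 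Equivalently, by Minsky's inequality $\lambda(\mathcal{G},X)\,\lambda(\mathcal{H},X)\ge i(\mathcal{G},\mathcal{H})^2$ one has the duality $\lambda(\mathcal{G},X)=\sup_{\mathcal{H}} i(\mathcal{G},\mathcal{H})^2/\lambda(\mathcal{H},X)$, and the lower bound is exactly the term $\mathcal{H}=\nu$, using $\lambda(\nu,X_t)=e^{-2t}$ (which follows from Theorem~\ref{thm:hubbard:masur} together with $\mathcal{F}(q_t)=e^{t}\nu$).

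The upper bound is the crux, and the only place unique ergodicity is used. Rewriting the rescaled duality as $e^{-2t}\lambda(\mathcal{G},X_t)=\sup_{\mathcal{H}} i(\mathcal{G},\mathcal{H})^2/\phi_t(\mathcal{H})$ with $\phi_t(\mathcal{H}):=e^{2t}\lambda(\mathcal{H},X_t)$, I must show the supremum concentrates at $\mathcal{H}=\nu$ as $t\to\infty$. Here $\phi_t(\nu)=1$ for all $t$, while the lower bound already gives $\phi_t(\mathcal{H})\ge e^{4t} i(\nu,\mathcal{H})^2$. Because $\nu$ is uniquely ergodic it fills $\Sigma_{g,n}$, so $i(\nu,\mathcal{H})=0$ forces $\mathcal{H}=c\nu$; hence on the compact set $\{[\mathcal{H}] : \mathrm{dist}([\mathcal{H}],[\nu])\ge\varepsilon\}$ one has $i(\nu,\cdot)\ge\delta_\varepsilon>0$, $\phi_t\to\infty$ uniformly, and the corresponding part of the supremum tends to $0$; along the ray $\mathcal{H}=c\nu$ the term equals $i(\mathcal{G},c\nu)^2/\phi_t(c\nu)=i(\nu,\mathcal{G})^2$, already the claimed value. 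What remains is to exclude a competitor $\mathcal{H}_t\to\nu$ \emph{off} the ray for which $i(\mathcal{G},\mathcal{H}_t)^2/\phi_t(\mathcal{H}_t)$ exceeds $i(\nu,\mathcal{G})^2$ by a definite amount. I expect this interchange of $\sup$ and $\lim$ in shrinking neighbourhoods of the degenerate direction $\nu$ to be the one genuinely hard step, requiring a quantitative form of unique ergodicity that controls $\lambda(\mathcal{H},X_t)$ for $\mathcal{H}$ near $\nu$. It is precisely here that the hypothesis is indispensable: for a non-uniquely-ergodic $\nu$ the further ergodic components provide foliations $\mathcal{H}\ne c\nu$ with $i(\nu,\mathcal{H})=0$, on which $\phi_t$ stays bounded, and the conclusion fails.

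Granting the rescaling limit, both implications follow. For ``$\Leftarrow$'', the hypotheses $\mathcal{F}(q_0)=\mathcal{F}(p_0)=\nu$ and $\lambda(\nu,X_0)=\lambda(\nu,Y_0)$ give the \emph{same} limiting function $i(\nu,\cdot)^2$ for the two geodesics, so $\lambda(\mathcal{G},X_t)/\lambda(\mathcal{G},Y_t)\to 1$ pointwise on $\mathbb{P}\mathcal{MF}_{g,n}$; an Arzel\`a--Ascoli argument (the limit is continuous and degree-two homogeneous, and the direction $\mathcal{G}=\nu$ gives ratio exactly $1$) upgrades this to the uniform convergence demanded by Kerckhoff's formula, so $d_{\T_{g,n}}(X_t,Y_t)\to 0$. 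For ``$\Rightarrow$'', $d_{\T_{g,n}}(X_t,Y_t)\to 0$ forces the two rescaling limits to coincide; testing the common limit against $\mathcal{G}=\mathcal{F}(p_0)$ and using $e^{-2t}\lambda(\mathcal{F}(p_0),Y_t)=e^{-4t}\lambda(\mathcal{F}(p_0),Y_0)\to 0$ yields $i(\nu,\mathcal{F}(p_0))=0$, whence $\mathcal{F}(p_0)=c\,\nu$ by unique ergodicity; comparing the limiting functions gives $\lambda(\nu,Y_0)=\lambda(\nu,X_0)$, and the $Q_1$ normalisation then forces $c=1$, which is exactly the two stated conditions.
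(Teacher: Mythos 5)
The paper does not prove this statement at all: it is quoted verbatim from Masur~\cite{Masur:ergodic:geodesics}, so the comparison is with Masur's original argument rather than an internal proof. Within your proposal, the ``$\Rightarrow$'' direction is essentially complete and genuinely soft: your lower bound $\lambda(\mathcal{G},X_t)\geq e^{2t}\,i(\nu,\mathcal{G})^2$, combined with the quasiconformal-distortion half of Kerckhoff's formula, gives $e^{2t}\,i(\nu,\mathcal{F}(p_0))^2\leq \lambda(\mathcal{F}(p_0),X_t)\leq e^{2d_{\T_{g,n}}(X_t,Y_t)}e^{-2t}\lambda(\mathcal{F}(p_0),Y_0)$, forcing $i(\nu,\mathcal{F}(p_0))=0$; unique ergodicity and the normalization then pin down the scale. (This also quietly repairs a point you glossed: the hypothesis makes only $\mathcal{F}(q_0)$ uniquely ergodic, so you may not assume a rescaling limit for the $Y$-ray a priori; the soft inequality sidesteps that.)

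The genuine gap is in ``$\Leftarrow$'', and it has two layers. First, the rescaling limit $e^{-2t}\lambda(\mathcal{G},X_t)\to i(\nu,\mathcal{G})^2$ is true (it is Miyachi's theorem that uniquely ergodic rays converge in the Gardiner--Masur boundary to $[\,i(\nu,\cdot)\,]$; cf.\ also Walsh's horofunction description), but you leave its upper-bound half --- the concentration of the duality supremum at $\mathcal{H}=\nu$ --- explicitly unproved, and the known proofs of that statement are of comparable depth to Masur's theorem itself, so the reduction carries essentially all of the difficulty. Second, and independently: even granting uniform convergence of both rescaled families to the common limit $i(\nu,\cdot)^2$ on $\mathbb{P}\mathcal{MF}_{g,n}$, this does \emph{not} control $\sup_{\mathcal{G}}\lambda(\mathcal{G},X_t)/\lambda(\mathcal{G},Y_t)$, because the limit vanishes at $[\nu]$: if $a_t\to f$ and $b_t\to f$ uniformly with $f([\nu])=0$, the ratio $a_t/b_t$ is $0/0$ near $[\nu]$ and can blow up (take $a_t=f+\epsilon_t$, $b_t=f+\delta_t$ with $\epsilon_t/\delta_t\to\infty$); agreement of the two rays at the single direction $\mathcal{G}=\nu$ does not constrain the nearby cone, and your Arzel\`a--Ascoli remark supplies no equicontinuity in $t$ to rule this out. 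That degenerate cone around $\nu$ is precisely where Masur works, and he does so by an entirely different route: no extremal-length duality, but a direct flat-geometry construction in which minimality (dense leaves) and unique ergodicity (uniform distribution of leaves against the transverse measure) are used to build explicit comparison maps between the flat surfaces $(X_t,q_t)$ and $(Y_t,p_t)$ whose dilatations tend to $1$, bounding $d_{\T_{g,n}}(X_t,Y_t)$ directly. So: the strategy is plausible and half-executed, but the hard half of the equivalence rests on an unproved claim plus an unjustified interchange of supremum and limit exactly where unique ergodicity must do its work.
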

\begin{remark}
  It is known that this result is not true for measured foliations
  that are not uniquely ergodic.
\end{remark}
\subsection*{Proof of Theorem~\ref{thm:balls}}
Let $f: \mathbb{CH}^2 \hookrightarrow \T_{g,n}$ be a holomorphic
isometry for the Kobayashi metric. We summarize the proof in the
following three steps:~\\

\noindent\textbf{1.} \textit{Asymptotic behavior of geodesics
  determines the extremal length horocycles.}

\noindent\textbf{2.} \textit{The geometry of horocycles determines the
  geometric intersection pairing.}

\noindent\textbf{3.} \textit{Get a contradiction by a direct
  computation of the geometric intersection pairing.}~\\

\noindent\textbf{Step 1.} Let $X =f((0,0)) \in \T_{g,n}$ and $q,p \in Q_1(X)$
unit area quadratic differentials generating the two Teichm\"uller
geodesic rays $f(\gamma_1)$,$f(\gamma_2)$, where $\gamma_1$,$\gamma_2$
are two orthogonal geodesic rays in $\mathbb{CH}^2$ contained in the
image of the totally real geodesic hyperbolic plane
$\mathbb{RH}^2\subset\mathbb{CH}^2$; explicitly, they are given by the
formulas $\gamma_1(t) = (\tanh(t),0)$, $\gamma_2(t)=(0,\tanh(t))$, for
$t\geq0$.

For every $(X,q) \in Q_1\T_{g,n}$ there is a dense set of
$\theta \in \mathbb{R}/2\pi\mathbb{Z}$ such that the measured
foliation $\mathcal{F}(e^{i\theta}q)$ is uniquely
ergodic~\cite{Chaika:Cheung:Masur:winning}; hence, we can assume
without loss of generality (up to a holomorphic automorphism of
$\mathbb{CH}^2$) that both $\mathcal{F}(q)$ and $\mathcal{F}(p)$ are
(minimal) uniquely ergodic measured foliations. In particular, we can
apply Theorem~\ref{thm:masur:ue} to study the extremal length
horocycles asymptotic to $\mathcal{F}(q)$ and $\mathcal{F}(p)$
respectively.

The complex hyperbolic horocycle $H(\gamma_1,1)$ is characterized by
the property that for the points $P \in H(\gamma_1,1)$ the geodesic
distance between $\gamma_P(t)$ and $\gamma_1(t)$ tends to zero as
$t \rightarrow +\infty$, where $\gamma_P(t)$ is the unique geodesic
with unit speed through $P$ that is positively asymptotic to
$\gamma_1$. Applying Theorem~\ref{thm:masur:ue} we conclude that:
\begin{equation}\label{eq:1}
f(\mathbb{CH}^2) \cap H(\mathcal{F}(q),1) = f( H(\gamma_1,1))
\end{equation}
\begin{equation}\label{eq:2}
f(\mathbb{CH}^2) \cap H(\mathcal{F}(p),1) = f( H(\gamma_2,1))  
\end{equation}

\noindent\textbf{Step 2.}  Let $\delta$ be the (unique) complete real 
geodesic in $\mathbb{CH}^2$, which is asymptotic to $\gamma_1$ in the
positive direction and to $\gamma_2$ in the negative direction,
i.e. its two endpoints are $(1,0),(0,1) \in \C^2$ in the boundary of
the unit ball. Let $P_1$ and $P_2$ be the two points where $\delta$
intersects the horocycles $H(\gamma_1,1)$ and $H(\gamma_2,1)$,
respectively. See~\ref{fig1}.

The image of $\delta$ under the map $f$ is a Teichm\"uller geodesic
which is parametrized by a pair of measured foliations
$\mathcal{F},\mathcal{G} \in \mathcal{MF}_{g,n}$ with
$i(\mathcal{F},\mathcal{G})=1$ and its unique lift to $Q_1\T_{g,n}$ is
given by $(e^t \mathcal{F},e^{-t}\mathcal{G})$, for $t\in \R$. Let
$\widetilde{P_i} = (e^{t_i} \mathcal{F},e^{-t_i}\mathcal{G})$, for
$i=1,2$, denote the lifts of $P_1,P_2$ along the geodesic
$\delta$. Then, the distance between the two points is given by
$d_{\mathbb{CH}^2}(P_1,P_2) = t_2-t_1$. From Step 1, we conclude that
$e^{t_1}\mathcal{F} = \mathcal{F}(q)$ (\ref{eq:1}) and
$e^{-t_2}\mathcal{G}=\mathcal{F}(p)$ ((\ref{eq:2}). Therefore we have
$i(\mathcal{F}(q),\mathcal{F}(p)) = e^{t_1 - t_2}$.
\begin{figure}[ht]
	\centering
  \includegraphics[scale=0.3]{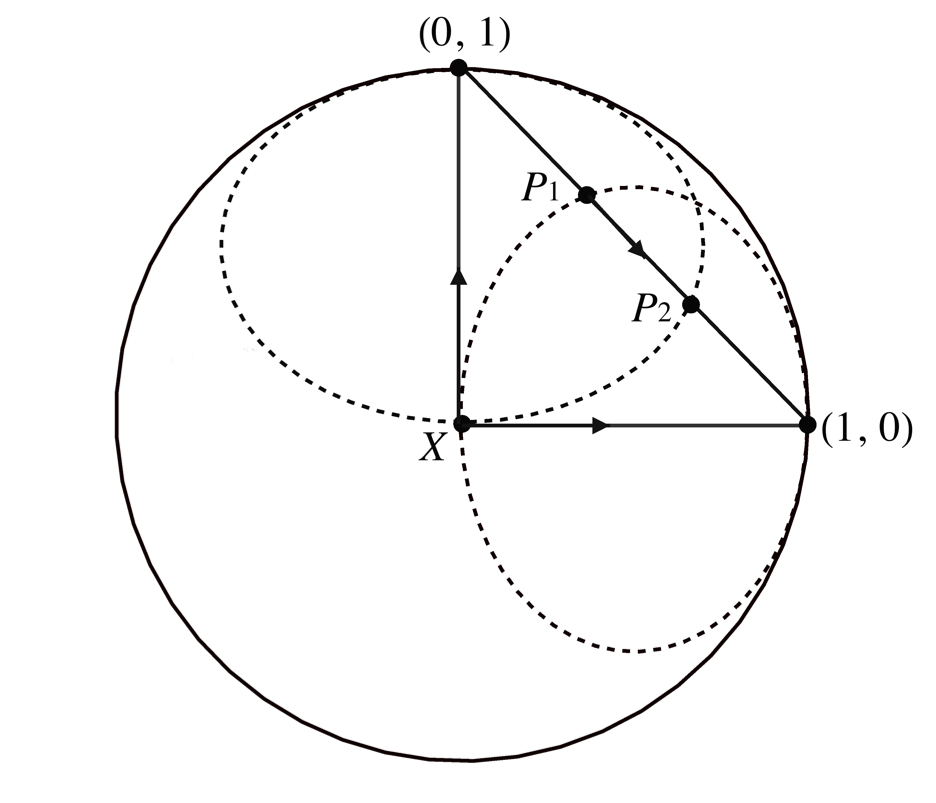}
  \caption{The real slice of $\mathbb{CH}^2\subset \mathbb{C}^2$
    coincides with the Klein model $\mathbb{RH}^2\subset \mathbb{R}^2$
    of the real hyperbolic plane of constant curvature $-1$. }
	\label{fig1}
\end{figure}
\begin{remark}
  A simple calculation shows that $t_2 -t_1= \log(2)$; hence,
  $i(\mathcal{F}(q),\mathcal{F}(p))= \frac{1}{2}$.
\end{remark}

\noindent\textbf{Step 3.} 
The holomorphic automorphism given by
$\phi (z,w)= (e^{-i \theta}z,w)$, for $(z,w)\in \mathbb{CH}^2$, is an
isometry of $\mathbb{CH}^2$ and sends the two horocycles
$H(\gamma_i,1)$ to the horocycles $H(\phi(\gamma_i),1)$, for
$i=1,2$. The Teichm\"uller geodesic ray $f(\phi(\gamma_1))$ is now
generated by $e^{i \theta} q$, whereas the Teichm\"uller geodesic ray
$f(\phi(\gamma_2))$ is still generated by $p \in Q(X)$. Since the
distance between $P_1$ and $P_2$ is equal to the distance between
$\phi(P_1)$ and $\phi(P_2)$, using Step 2 and the continuity of the
geometric intersection pairing we conclude that
$i(\mathcal{F}(e^{i \theta}q), \mathcal{G}) = \frac{1}{2}$ for all
$\theta \in \mathbb{R}/2\pi\mathbb{Z}$. However, this contradicts the
following Proposition~\ref{prop:intersection}.\qed
\restate[Proposition]{prop:intersection}{
  Let $q\in Q_1\T_{g,n}$ and $\mathcal{G}\in \mathcal{MF}_{g,n}$.
  There exist $v_1, \ldots, v_N \in \C^{*}$ such that
  $i(\mathcal{F}(e^{i\theta}q),\mathcal{G})=\sum_{i=1}^{N} |
  \text{Re}(e^{i\theta/2}v_i)|$ for all $\theta \in
  \mathbb{R}/2\pi\mathbb{Z}$.}
\begin{proof}[Proof of Proposition~\ref{prop:intersection}]
  Let $q \in Q(X)$ be a unit area quadratic differential. We assume
  first that $q$ has no poles and that $\mathcal{G}$ is an isotopy
  class of simple closed curves. The metric given by $|q|$ is flat
  with conical singularities of negative curvature at its set of zeros
  and hence the isotopy class of simple closed curves $\mathcal{G}$
  has a unique geodesic representative, which is a finite union of
  saddle connections of $q$. In particular, we can readily compute
  $i(\mathcal{F}(e^{i\theta}q),\mathcal{G})$ by integrating
  $|\text{Re} (\sqrt{e^{i\theta}q} )|$ along the union of these saddle
  connections. It follows that:
  \begin{equation}\label{eq:intersection}
    i(\mathcal{F}(e^{i\theta}q),\mathcal{G}) = \sum_{i=1}^{N} |
    \text{Re}(e^{i\theta/2}v_i)|
    \quad \text{for all} \quad \theta \in \mathbb{R}/2\pi\mathbb{Z}
  \end{equation}
  where $N$ denotes the number of the saddles connections and
  $\{ v_i \}_{i=1}^{N} \subset \C^{*}$ are their associated holonomy
  vectors.

  We note that when $q$ has simple poles, there need not be a geodesic
  representative in $\mathcal{G}$ anymore. Nevertheless, equation
  (\ref{eq:intersection}) is still true by applying the argument to a
  sequence of length minimizing representatives.

  Finally, we observe that the number of saddle connections $N$ is
  bounded from above by a constant that depends only on the topology
  of the surface. Combining this observation with the fact that any
  $\mathcal{G} \in \mathcal{MF}_{g,n}$ is a limit of simple closed
  curves and that the geometric intersection pairing $i(\cdot,\cdot):
  \mathcal{MF}_{g,n} \times \mathcal{MF}_{g,n} \rightarrow \R$ is
  continuous, we conclude that equation~(\ref{eq:intersection}) is
  true in general.
\end{proof}



\addtocontents{toc}{\protect\setcounter{tocdepth}{1}}

\section{Symmetric spaces vs Teichm\"uller spaces}\label{sec:bsds}
Let $\T_{g,n}\subset\C^{3g-3+n}$ be a Teichm\"uller space and
$\mathcal{B}\subset\C^N$ a bounded symmetric domain equipped with their
\textit{Kobayashi} metrics. In this section, we complete the proof of
the following theorem.
\begin{thm}\label{thm:bsds}
  Let $\mathcal{B}\subset\C^{N}$ be a bounded symmetric domain and $\T_{g,n}$ be
  a Teichm\"uller space with $\text{dim}_{\C}\mathcal{B},\text{dim}_{\C}\T_{g,n}
  \geq 2$. There are no holomorphic isometric immersions \[\mathcal{B}
  \xhookrightarrow{~~~f~~~} \T_{g,n} \quad \text{or} \quad \T_{g,n}
  \xhookrightarrow{~~~f~~~} \mathcal{B}\] such that $df$ is an isometry for the
  Kobayashi norms on tangent spaces.~\\
\end{thm}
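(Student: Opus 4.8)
The plan is to treat the two directions by different means and, on the symmetric-domain side, to organize everything by the rank of $\mathcal{B}$. By the polydisk theorem \cite{Helgason:book:dglgss}, a bounded symmetric domain of rank $r$ contains a totally geodesic, holomorphically and isometrically embedded polydisk $\Delta^r$ whose restricted Kobayashi metric is the sup-metric of its factors. Thus a rank-one domain of dimension $\geq 2$ is a ball $\mathbb{CH}^N$ with $N\geq 2$ and contains a totally geodesic $\mathbb{CH}^2$, while a domain of rank $\geq 2$ contains a totally geodesic bidisk $\mathbb{CH}^1\times\mathbb{CH}^1$. These two model $2$-dimensional submanifolds are the objects I would feed into the machinery of \S\ref{sec:balls}.

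For the direction $\mathcal{B}\xhookrightarrow{~f~}\T_{g,n}$, I would first restrict $f$ to the model submanifold supplied above. When $\mathcal{B}$ has rank one, $f|_{\mathbb{CH}^2}$ is again a holomorphic isometric immersion $\mathbb{CH}^2\hookrightarrow\T_{g,n}$ for the Kobayashi metric, which is ruled out by Theorem~\ref{thm:balls}. The remaining case, rank $\geq 2$, yields an isometric bidisk $\mathbb{CH}^1\times\mathbb{CH}^1\hookrightarrow\T_{g,n}$, and to exclude this I would rerun the three-step scheme of \S\ref{sec:balls}: by rigidity \cite{Slodkowski:motions},\cite{Earle:Markovic:isometries} the embedding is totally geodesic, its two factors are Teichm\"uller disks generated by some $q,p\in Q_1(X)$ at the center $X$, and, after a generic rotation making the relevant foliations uniquely ergodic \cite{Chaika:Cheung:Masur:winning}, Masur's Theorem~\ref{thm:masur:ue} identifies the extremal-length horocycles with the images of the factorwise horocycles. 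Since $(z,w)\mapsto(e^{-i\theta}z,w)$ is an isometry of the bidisk fixing the second factor, the distance computation of Step~2 would force $i(\mathcal{F}(e^{i\theta}q),\mathcal{G})$ to be \emph{constant} in $\theta$, contradicting the genuinely non-constant form $\sum_i|\mathrm{Re}(e^{i\theta/2}v_i)|$ of Proposition~\ref{prop:intersection}.

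For the direction $\T_{g,n}\xhookrightarrow{~f~}\mathcal{B}$, I would argue once and for all via homogeneity. A holomorphic isometric immersion for the Kobayashi metric carries each complex geodesic of the source into a complex geodesic of the target, because complex geodesics are characterized metrically as images of the extremal disks. Hence through every point $f(x)$ and every direction of $T_{f(x)}f(\T_{g,n})$, the Teichm\"uller disk of $\T_{g,n}$ through $x$ maps to a complex geodesic $\mathbb{CH}^1\subset\mathcal{B}$ that is tangent to that direction and contained in $f(\T_{g,n})$; so $f(\T_{g,n})$ contains every complex geodesic of $\mathcal{B}$ tangent to it. Such a submanifold is preserved by each symmetry $\sigma_p$ of $\mathcal{B}$ with $p\in f(\T_{g,n})$, since $\sigma_p$ acts as $-\mathrm{id}$ on $T_p\mathcal{B}$ and maps each complex geodesic through $p$ to itself; therefore $f(\T_{g,n})$ is itself a bounded symmetric domain. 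This would make $\T_{g,n}$ biholomorphic to a bounded symmetric domain, so $\mathrm{Aut}(\T_{g,n})$ would be a positive-dimensional Lie group, contradicting its discreteness for $\dim_\C\T_{g,n}\geq 2$ \cite{Royden:metric},\cite{Earle:Markovic:isometries}.

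The main obstacle I expect is the rank $\geq 2$ case of the first direction, namely ruling out a totally geodesic bidisk inside $\T_{g,n}$. Unlike $\mathbb{CH}^2$, the sup-metric on $\mathbb{CH}^1\times\mathbb{CH}^1$ has non-unique geodesics and a more delicate asymptotic geometry, so the identification of the geodesic $\delta$, its endpoints, and the relevant horocycles in Step~2 must be set up carefully before the rotation symmetry can be played against Proposition~\ref{prop:intersection}. The reverse direction, by contrast, I expect to be essentially formal once the preservation of complex geodesics is in hand, and it has the pleasant feature of dispatching every $\mathcal{B}$ — ball, polydisk, or irreducible of higher rank — by the single mechanism that a totally geodesic copy of $\T_{g,n}$ would have to be symmetric.
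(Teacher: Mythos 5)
Your reduction of the first direction to the two model submanifolds $\mathbb{CH}^2$ and $\mathbb{CH}^1\times\mathbb{CH}^1$ matches the paper, and your rank-one case is handled identically via Theorem~\ref{thm:balls}. But for rank $\geq 2$ you propose to rerun the three-step extremal-length argument on the bidisk, and you correctly flag this as the main obstacle without resolving it: the argument of \S\ref{sec:balls} relies on the fact that through each point of $\mathbb{CH}^2$ there is a \emph{unique} unit-speed geodesic positively asymptotic (with distance tending to zero) to a given ray, which is what lets Theorem~\ref{thm:masur:ue} identify the extremal-length horocycles with images of the complex hyperbolic ones. In the sup-metric on $\mathbb{CH}^1\times\mathbb{CH}^1$ geodesics and asymptotes are wildly non-unique --- $t\mapsto(\tanh t,\beta(t))$ is a unit-speed geodesic for any $1$-Lipschitz path $\beta$ --- so Steps 1 and 2 do not transfer, and the geodesic $\delta$ joining the two ideal endpoints is not even well defined. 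The paper sidesteps this entirely: by Sullivan's rigidity theorem in the form of \cite{Tanigawa:holomap} there is no \emph{proper} holomorphic map $\mathbb{CH}^1\times\mathbb{CH}^1\to\T_{g,n}$ at all, and a totally geodesic isometric embedding of a complete space is automatically proper.

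The second direction has a more serious gap. Your argument needs two facts: that $f(\T_{g,n})$ contains \emph{every} complex geodesic of $\mathcal{B}$ tangent to it, and that $\sigma_p$ maps each complex geodesic through $p$ \emph{to itself}. Both rest on uniqueness of the complex geodesic through a point in a prescribed direction, and this fails precisely when $\mathcal{B}$ has rank $\geq 2$: in the bidisk, every graph $\{(z,h(z))\}$ with $h:\Delta\to\Delta$ holomorphic, $h(0)=0$ and $|h'(0)|<1$ is a complex geodesic through $0$ in the direction $(1,h'(0))$, and $\sigma_0=-\mathrm{id}$ carries it to the different geodesic $\{(z,-h(-z))\}$ unless $h$ is odd. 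So from ``each Teichm\"uller disk maps to \emph{some} complex geodesic'' you cannot conclude that $f(\T_{g,n})$ is $\sigma_p$-invariant, and the chain ``image is symmetric, hence $\T_{g,n}$ is symmetric, hence $\mathrm{Aut}(\T_{g,n})$ is not discrete'' never gets started. (Your mechanism does work for $\mathcal{B}=\mathbb{CH}^N$, where complex geodesics are intersections with affine lines and are unique in each direction, but that is exactly the case where it is least needed.) The paper's argument here is entirely different and is genuinely required: it compares the \emph{regularity} of the two distance functions along a real-analytic path. By Proposition~\ref{prop:bsd} the Kobayashi distance in $\mathcal{B}$ from a fixed point is real-analytic in $t^{1/K}$ along such a path, which combined with the $C^2$-smoothness of Theorem~\ref{thm:c2} forces it to be $C^{2+1/K}$; but Theorem~\ref{thm:notc2} produces a path in $\T_{g,n}$ along which the Teichm\"uller distance is not $C^{2+h}$ even for logarithmic gauges $h$. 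Since a totally geodesic isometric immersion transports one distance function to the other, this is a contradiction. You should adopt that roughness comparison; the symmetry-propagation idea cannot be repaired for higher-rank targets.
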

\begin{remarks}~\\
  1. Torelli maps (associating to a marked Riemann surface the
  Jacobians of its finite covers) give rise to holomorphic maps
  $\T_{g,n} \xhookrightarrow{~~~\mathcal{T}~~~} \mathcal{H}_h$ into
  bounded symmetric domains (Siegel spaces). It is known that these
  maps are not isometric for the Kobayashi metric in most
  directions.~\cite{McMullen:covs}~\\
  2. For a similar result about holomorphic isometric submersions
  see~\cite{Antonakoudis:birational}.
\end{remarks}
\subsection*{Outline of the proofs}
The proof that $\mathcal{B} \not\hookrightarrow \T_{g,n}$ follows from
Theorem~\ref{thm:balls} (rank one) and a classical application of
Sullivan's rigidity theorem (higher rank). The new ingredient we
introduce in this section is a comparison of the \textit{roughness} of
Kobayashi metric for bounded symmetric domains and Teichm\"uller
spaces, which we will use to prove that $\T_{g,n} \not\hookrightarrow \mathcal{B}$ 
\qed ~\\

\subsection*{Preliminaries on symmetric spaces}~\\
We give a quick review of the main features of symmetric spaces, from
a complex analysis perspective, which we use in the proof. We refer
to~\cite{Helgason:book:dglgss},~\cite{Satake:book:symmetric} for more
details.

Let $\mathcal{B} \subset \mathbb{C}^N$ be a bounded symmetric domain
and $p\in \mathcal{B}$. There is a \textit{unique}, up to
post-composition with a linear map, holomorphic embedding
$\mathcal{B}\xhookrightarrow{~~i~~}\C^N$ such that
$i(\mathcal{B}) \subset \C^N$ is a \textit{strictly convex} circular
domain with $i(p)= 0 \in \C^N$, which we refer to as the
Harish-Chandra realization of $\mathcal{B}$ centered at
$p\in \mathcal{B}$.

It is known that the Harish-Chandra realization of
$\mathcal{B}\subset \C^N$ has the following useful description. There
is a finite dimensional linear subspace
$V_{\mathcal{B}} \subset M_{n,m}(\C)$, of the space of complex
$n \times m$ matrices, such that
$\mathcal{B}\cong \{~~V \in V_{\mathcal{B}} ~~|~~ ||V||_{\mathcal{B}}
< 1~~ \}$
is the unit ball for the operator norm on $V_{\mathcal{B}}$, where
$||V||_{\mathcal{B}} = \text{sup}_{||\xi||_2 =1} ||V(\xi)||_2$, for
$V \in M_{n,m}(\C)$. We note that there is a natural identification
$T_{p}\mathcal{B} \cong V_{\mathcal{B}}\cong \C^N$.~\cite{Satake:book:symmetric}

The Kobayashi norm on $T_{p}\mathcal{B}\cong V_{\mathcal{B}}$
coincides with the operator norm $||V||_\mathcal{B}$, for
$V\in V_{\mathcal{B}} \subset M_{n,m}(\C)$ and the Kobayashi distance
from the origin is given by the formula
$d_\mathcal{B}(0,V)=\frac{1}{2}\log(\frac{1+||V||_\mathcal{B}}{1-||V||_\mathcal{B}})$,
for $V \in \mathcal{B}$.~\cite{Kubota:sym}~\\

\subsection*{Roughness of the Kobayashi metric}~\\
The following proposition describes the roughness of the Kobayashi
distance for bounded symmetric domains.

\begin{prop}\label{prop:bsd}
  Let $V: (-1,1) \rightarrow \mathcal{B}$ be a real-analytic path with
  $V(0)\neq p$. There is an integer $K >0$ and an $\epsilon >0 $ such
  that  $d_{\mathcal{B}}(p,V(\cdot)): [0,\epsilon)\rightarrow \mathcal{B}$
  is a real-analytic function of $t^{1/K}$ for $t \in [0,\epsilon)$.
\end{prop}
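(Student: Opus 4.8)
The plan is to reduce the claim to the roughness of the operator norm along $V$ and then analyze the latter with the Newton--Puiseux theorem. First I would use the homogeneity of $\mathcal{B}$ to pass to the Harish-Chandra realization centered at $p$, so that $p = 0 \in V_{\mathcal{B}} \subset M_{n,m}(\C)$ and the Kobayashi distance is given by the explicit formula $d_{\mathcal{B}}(p, W) = \frac{1}{2}\log\frac{1+||W||_{\mathcal{B}}}{1-||W||_{\mathcal{B}}}$, where $||\cdot||_{\mathcal{B}}$ is the operator norm. Writing $g(s) = \frac{1}{2}\log\frac{1+s}{1-s}$, which is real-analytic on $[0,1)$, and $s(t) = ||V(t)||_{\mathcal{B}}$, we have $d_{\mathcal{B}}(p, V(t)) = g(s(t))$ with $s(t) \in [0,1)$ since $V(t) \in \mathcal{B}$. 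Thus it suffices to show that $s(t)$ is a real-analytic function of $t^{1/K}$ on some $[0,\epsilon)$; the conclusion then follows by composing with the analytic function $g$.

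Next I would linearize the operator norm through singular values. Since $||W||_{\mathcal{B}}$ is the largest singular value of $W$, we have $s(t)^2 = \lambda_{\max}(A(t))$, where $A(t) = V(t)^{*}V(t)$ is a real-analytic path of Hermitian positive semidefinite matrices. The eigenvalues of $A(t)$ are the roots of the characteristic polynomial $\chi(t,\lambda) = \det(\lambda I - A(t))$, a monic polynomial in $\lambda$ whose coefficients are real-analytic in $t$ near $0$; extending these coefficients holomorphically to a complex neighborhood of $0$ and applying the Newton--Puiseux theorem, the roots are described near $t=0$ by finitely many branches, each a convergent power series in a fractional power $t^{1/K}$ for a common positive integer $K$. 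Because $A(t)$ is Hermitian, each such branch is real-valued with real coefficients for $t \in (0,\epsilon)$.

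It then remains to select the maximal branch and unwind the compositions. For small $t > 0$ any two distinct branches are comparable, since their difference is a Puiseux series with a leading term of definite sign; hence after shrinking $\epsilon$ the maximum $\lambda_{\max}(A(t))$ is given on $[0,\epsilon)$ by a single branch $\mu(t)$, a real-analytic function of $u = t^{1/K}$. Since $V(0) \neq p$, we have $\mu(0) = ||V(0)||_{\mathcal{B}}^2 > 0$, so the square root is real-analytic near $\mu(0)$ and $s(t) = \sqrt{\mu(t)}$ is again real-analytic in $u$; composing once more with $g$ yields the proposition. (The hypothesis $V(0)\neq p$ is used only to keep $s(t)$ bounded away from $0$, making the square-root composition manifestly analytic.)

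The main obstacle is the eigenvalue step: one must justify that the largest eigenvalue of a real-analytic Hermitian family admits a fractional-power (Puiseux) expansion and, crucially, that the pointwise maximum of the branches is itself a \emph{single} branch on a one-sided neighborhood of $0$ --- the rest of the argument is routine composition of analytic functions. I note that this step can alternatively be handled by Rellich's theorem, which for a one-parameter real-analytic Hermitian family produces globally real-analytic eigenvalue branches; this would in fact give the stronger conclusion $K = 1$, but the Puiseux formulation matches the statement and is self-contained.
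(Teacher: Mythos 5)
Your proposal is correct and follows essentially the same route as the paper: pass to the Harish--Chandra realization centered at $p$, express the operator norm via the eigenvalues of $V(t)^{*}V(t)$, apply Newton--Puiseux to the characteristic polynomial, and compose with the explicit distance formula. You in fact supply details the paper leaves implicit --- that the maximum of the Puiseux branches is a single branch on a one-sided neighborhood of $0$, and that $V(0)\neq p$ is what makes the square-root composition analytic --- and your remark that Rellich's theorem would give $K=1$ is a valid (stronger) alternative.
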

\begin{proof}
  Let
  $\mathcal{B} = \{~~||V||_{\mathcal{B}} < 1~~\}\subset
  V_{\mathcal{B}} \subset M_{n,m}(\C)$
  be the Harish-Chandra realization of $\mathcal{B}$ centered at $p$.
  For each $t\in (-1,1)$, we denote by $\lambda_i(t)$, for
  $i=1, \ldots, n$, the eigenvalues of the (positive) square matrix
  $V(t)^{*}V(t)$, counted with multiplicities, where $V^{*}$ denotes
  the Hermitian adjoint of $V$.

  The eigenvalues of $V(t)^{*}V(t)$ are the zeros of a polynomial, the
  coefficients of which are real-analytic functions of $t\in (-1,1)$.
  Therefore, the points $(t,\lambda_i (t)) \in \C^2$ for
  $i=1, \ldots, n$ and $t\in (-1,1)$ are contained in an algebraic
  curve $C= \{~~(t,\lambda) \in \C^2 ~~|~~ P(t,\lambda)=0~~\}$, which
  is equipped with a finite-degree branched covering map to
  $\mathbb{C}$ given by $(t,\lambda) \mapsto t$, for  $(t,\lambda) \in C$.

  Since the operator norm is given by the formula
  $|| V(t) ||_\mathcal{B} = \sup \{| \lambda_i (t)|^{1/2}\}_{i=1}^n$,
  the proof of the proposition follows by considering the Puiseux
  series expansion for $\lambda_i(t)$'s and the formula
  $d_\mathcal{B}(0,V(t))=\frac{1}{2}\log(\frac{1+||V(t)||_\mathcal{B}}{1-||V(t)||_\mathcal{B}})$.
\end{proof}

The roughness of the Kobayashi metric for Teichm\"uller spaces is
described by the following two theorems of M.~Rees.
\begin{thm}(\cite{Rees:distance:c2}; M. Rees)\label{thm:c2} 
  The Teichm\"uller distance
  $d_{\T_{g,n}} : \T_{g,n}\times\T_{g,n} \rightarrow \R_{\geq 0}$ is
  $C^2$-smooth on the complement of the diagonal
  $d_{\T_{g,n}}^{-1}(0)$.
\end{thm}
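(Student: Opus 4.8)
The plan is to exhibit $d_{\T_{g,n}}$, away from the diagonal, as the time coordinate of the inverse of the Teichm\"uller geodesic endpoint map, and to deduce its regularity from that of the endpoint map. Using the real-analytic $\text{PSL}_2(\R)$-action on $Q_1\T_{g,n}$ from \S\ref{sec:prelim}, with geodesic flow $g_t = \text{diag}(e^t,e^{-t})$, define
\[
E : Q_1\T_{g,n}\times\R_{>0} \to (\T_{g,n}\times\T_{g,n})\setminus\Delta,
\qquad
E\big((X,q),t\big) = \big(X,\ \pi(g_t\cdot(X,q))\big),
\]
where $\pi$ is the bundle projection and $\Delta$ the diagonal. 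By Teichm\"uller's existence and uniqueness theorem any two distinct points are joined by a unique geodesic, so $E$ is a bijection; and the real dimensions match, since $\dim_\R(Q_1\T_{g,n}\times\R_{>0}) = (4(3g-3+n)-1)+1 = \dim_\R(\T_{g,n}\times\T_{g,n})$. If $E$ is a $C^2$ diffeomorphism, then $d_{\T_{g,n}}(X,Y)$ is the $C^2$ time component of $E^{-1}(X,Y)$, which is the theorem.

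I would prove this in two steps: regularity of $E$, and nondegeneracy of $dE$. The nondegeneracy is an infinitesimal form of Teichm\"uller uniqueness (no conjugate points along Teichm\"uller geodesics): a kernel vector of $dE$ at a point with $t>0$ would be an infinitesimal deformation of the geodesic data fixing both endpoints to first order, and one must rule this out; by the dimension match, injectivity of $dE$ is then equivalent to invertibility, and the inverse function theorem transfers the regularity of $E$ to $E^{-1}$. Equivalently, this step is the first variation formula: the differential of $d_{\T_{g,n}}(X,\cdot)$ at $Y$ is the terminal unit quadratic differential of the geodesic, regarded as a covector in $Q(Y)$, so that $d_{\T_{g,n}}$ is $C^2$ exactly when $(X,Y)\mapsto q^{\mathrm{term}}_{X,Y}\in Q_1(Y)$ is $C^1$.

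For the regularity of $E$, I would compute $\pi(g_t\cdot(X,q))$ analytically. The quadratic differential transforms linearly under $\text{PSL}_2(\R)$, while the new complex structure is the image of $X$ under the normalized solution of the Beltrami equation with coefficient $\mu_t(q) = \tanh(t)\,\bar q/|q|$. By Ahlfors--Bers the solution $f^{\mu}$ depends real-analytically on $\mu$ in the unit ball of $L^\infty$, so all loss of regularity is confined to the dependence $(t,q)\mapsto\mu_t(q)$. Away from the zeros of $q$ this dependence is real-analytic; hence $E$ is real-analytic on the open, full-measure locus where the terminal differential has only simple zeros.

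The hard part -- and the reason the sharp conclusion is $C^2$ and not $C^\infty$ -- is the behavior at the zeros of the quadratic differential. There $\bar q/|q|$ is a \emph{discontinuous} function of $q\in Q(X)\cong\C^{3g-3+n}$ in the $L^\infty$ topology, since an arbitrarily small change of $q$ moves its zeros and swings the associated line field on a neighborhood of each zero; yet the resulting point of $\T_{g,n}$ stays continuous because the solution operator of the Beltrami equation smooths. The real work is to quantify this smoothing: I would carry out a local analysis in the natural coordinate $w=\int\sqrt q$ near each zero, where a zero of order $k$ contributes fractional-power (and possibly logarithmic) terms governed by $k$ to the variation of the Bers coordinates of the endpoint, and then show that, uniformly over the stratification of $Q_1\T_{g,n}$ by zero multiplicities, precisely two derivatives survive. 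This local analysis near the singular foliation is where I expect the main difficulty to lie; it also matches the fact that the obstruction to higher regularity is concentrated near collisions of zeros and, in the limit $Y\to X$, on the diagonal that the theorem excludes.
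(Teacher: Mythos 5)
This statement is not proved in the paper at all: it is Rees's theorem, imported from \cite{Rees:distance:c2} and used as a black box in the proof that $\T_{g,n}\not\hookrightarrow\mathcal{B}$. So your attempt can only be measured against the strategy of Rees's own (long and delicate) proof, and by that standard it is an outline of the right general shape rather than a proof: you correctly reduce the problem to regularity plus nondegeneracy of the geodesic endpoint map (equivalently, via the first variation formula, to $C^1$ dependence of the terminal quadratic differential on the endpoints), and you correctly locate the difficulty at the zeros of $q$. But the two steps you defer are exactly the substance of the theorem, and several intermediate assertions are not correct as stated.

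Concretely: (1) the claim that $E$ is real-analytic on the simple-zero locus ``by Ahlfors--Bers'' does not follow, because, as you yourself note two sentences later, $q\mapsto \bar q/|q|$ is not even continuous into $L^\infty$ near a zero of $q$ --- and zeros are present for every $q$, simple or not; so the analytic dependence of $f^\mu$ on $\mu$ gives nothing directly, and even the $C^0$ regularity of $E$ in the $q$-variable already requires the local analysis near the zeros that you postpone to the end. (2) Invertibility of $dE$ is not ``an infinitesimal form of Teichm\"uller uniqueness'': uniqueness of geodesics makes $E$ a continuous bijection, hence a homeomorphism by invariance of domain, but a homeomorphism can have degenerate differential; the absence of conjugate points is an independent statement you would have to prove, and you give no argument. (3) There is also a setup problem: by Royden's theorem the norm $||q||_1$ on $Q(X)$ is $C^1$ but not $C^2$, so the sphere bundle $Q_1\T_{g,n}$ is a priori only a $C^1$ submanifold of $Q\T_{g,n}$, and an inverse-function-theorem argument carried out on it cannot output $C^2$ regularity of the inverse without first reworking the parametrization (and passing to $Q\T_{g,n}\setminus\{0\}$ reintroduces the non-smoothness of $||q||_1$ into the formula for the distance). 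None of this shows the strategy is hopeless, but the assertion that ``precisely two derivatives survive'' the singular behaviour at the zeros, uniformly over the stratification, is the theorem itself, and the proposal contains no argument for it.
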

\begin{thm}(\cite{Rees:distance:notc2}; M. Rees)\label{thm:notc2}
  When $\text{dim}_{\C}\T_{g,n} \geq 2$, the Teichm\"uller distance
  $d_{\T_{g,n}} : \T_{g,n}\times\T_{g,n} \rightarrow \R_{\geq 0}$ is
  \textit{not} $C^{2+\epsilon}$ for any $\epsilon>0$.

  Moreover, let $X,Y \in \T_{g,n}$ be two distinct points connected by
  a (real) Teichm\"uller geodesic which is generated by a quadratic
  differential $q\in Q_1(X)$, with either a zero of order two or
  number of poles less than $n$. There is a real analytic path
  $X(t):(-1,1)\rightarrow \T_{g,n}$ with $X(0)=X$ such that the
  distance $d_{\T_{g,n}}(X(t),Y)$ is not $C^{2+h}$-smooth at $t=0$,
  for every gauge function $h(t)$ with $\lim_{t \rightarrow 0}
  \frac{h(t)}{1/log(1/|t|)} = 0$.~\\
\end{thm}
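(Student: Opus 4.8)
The plan is to prove the quantitative second assertion and deduce the first. For the deduction, note that for any $\epsilon>0$ the power $h(t)=|t|^{\epsilon}$ is a gauge function with $h(t)\big/\bigl(1/\log(1/|t|)\bigr)=|t|^{\epsilon}\log(1/|t|)\to 0$ as $t\to 0$; hence a single real-analytic path along which $d_{\T_{g,n}}(X(t),Y)$ fails to be $C^{2+h}$ for all such $h$ already witnesses the failure of $C^{2+\epsilon}$. Since $\text{dim}_{\C}\T_{g,n}=3g-3+n\geq 2$, the space $Q(X)$ is large enough that its non-principal strata are nonempty: there are unit-mass $q\in Q_1(X)$ carrying a zero of order two, and (in the remaining low-complexity cases) differentials with fewer than $n$ simple poles. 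Any such $q$ together with a point $Y$ on the geodesic it generates gives a degenerate pair, so the first assertion follows from the second.

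For the second assertion I would first localize the loss of regularity to the variation of the geodesic direction. Off the diagonal the \teich\ geodesic from $X(t)$ to $Y$ is generated by a unique $q_t\in Q_1(X(t))$, and the first-variation formula for the Kobayashi--\teich\ distance reads $\tfrac{d}{dt}\,d_{\T_{g,n}}(X(t),Y)=\text{Re}\,\langle\dot X(t),\,q_t/||q_t||_1\rangle$, where $\langle\mu,q\rangle=\int_X\mu q$ is the pairing of Section~\ref{sec:prelim}. Equivalently, by Theorem~\ref{thm:hubbard:masur} and Kerckhoff's extremal-length description, $d_{\T_{g,n}}(X(t),Y)=\tfrac12\log\sup_{\mathcal{F}}\lambda(\mathcal{F},Y)/\lambda(\mathcal{F},X(t))$, with the supremum attained at the foliation $\mathcal{F}^{*}$ determined by the generating geodesic. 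Differentiating a second time and using the envelope property of the supremum, the contribution of $\ddot X(t)$ and of the motion of the maximizer $\mathcal{F}^{*}$ is smooth; the entire roughness is carried by the second variation of the extremal length $t\mapsto\lambda(\mathcal{F}^{*},X(t))$ of the \emph{fixed} optimal foliation, whose generating differential on $X(0)$ is the degenerate $q$.

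The crux is then a power-counting in the second-variation formula for extremal length. That formula contains, besides manifestly bounded terms, a contribution of the form $\text{Re}\int_X \theta^2/q$, where $q$ is the Hubbard--Masur differential of $\mathcal{F}^{*}$ on $X(0)$ and $\theta$ is the holomorphic quadratic differential encoding the deformation $\dot X(0)$. Near a zero of $q$ of order $k$ one has $\theta^2/q\sim z^{-k}\,dz^2$, so this integral converges for simple zeros ($k=1$) but is exactly logarithmically divergent at a zero of order two ($k=2$); the analogous borderline divergence occurs at a puncture where $q$ is holomorphic rather than having a simple pole. This is precisely why the hypotheses single out order-two zeros and fewer-than-$n$ poles, and it is the source of the \emph{logarithmic}, as opposed to fractional-power, roughness. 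I would then choose $X(t)$ transverse to the degenerate stratum so as to open up the zero into two simple zeros joined by a saddle connection of holonomy $\sim t$; this regularizes the borderline integral by cutting it off at scale $|t|$, turning it into a finite quantity of size governed by $\log(1/|t|)$. Reconciling this with the $C^2$-smoothness of Theorem~\ref{thm:c2}, one finds that $d_{\T_{g,n}}(X(t),Y)$ is twice differentiable with $d''$ continuous, but with $|d''(t)-d''(0)|$ comparable to $1/\log(1/|t|)$, which is exactly the failure of $C^{2+h}$ for every gauge $h=o(1/\log(1/|t|))$.

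The main obstacle is converting this heuristic into sharp two-sided control. On one side one must verify, in the spirit of Theorem~\ref{thm:c2}, that the regularized second variation does \emph{not} blow up --- that the envelope correction and the subleading terms conspire to keep $d''$ bounded and continuous through $t=0$. On the other side, and this is the real content, one must produce a matching \emph{lower} bound showing the modulus of continuity of $d''$ is no better than $1/\log(1/|t|)$: this requires tracking the coefficient of the regularized $\log(1/|t|)$ term precisely enough to rule out cancellation, and checking that the chosen analytic path is genuinely transverse to the degenerate stratum so that the saddle-connection scale is comparable to $|t|$. Controlling the motion of the maximizing foliation $\mathcal{F}^{*}$ near a degenerate maximum, where the second-variation quadratic form is itself near-degenerate, is the most delicate part of this bookkeeping.
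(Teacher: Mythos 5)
A point of calibration first: the paper does not prove this statement at all --- it is quoted as a theorem of M.~Rees, with the citation \cite{Rees:distance:notc2} standing in for the proof --- so your attempt has to be judged against Rees's paper rather than against anything in this one. That said, your reduction of the first assertion to the second is correct ($h(t)=|t|^{\epsilon}$ is an admissible gauge since $|t|^{\epsilon}\log(1/|t|)\to 0$, and the required degenerate strata are nonempty once $3g-3+n\geq 2$), and your identification of the mechanism --- a borderline, logarithmically divergent term of the shape $\mathrm{Re}\int_X \theta^2/q$ in the second variation of extremal length at a zero of order two (or at a puncture where $q$ lacks a pole), regularized at scale $|t|$ by opening the zero along a transverse analytic path --- is genuinely the right heuristic for why the hypotheses single out exactly these degenerate differentials and why the roughness is logarithmic rather than a fractional power.

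As a proof, however, there is a genuine gap, and you half-concede it yourself: everything that makes the theorem true is deferred. Two specific steps would fail as written. First, the claim that in differentiating Kerckhoff's formula ``the contribution of $\ddot X(t)$ and of the motion of the maximizer $\mathcal{F}^{*}$ is smooth'' is unjustified and in fact question-begging: near a degenerate $q$ the dependence of the extremal foliation on the base point is precisely as rough as the quantity you are estimating, and the envelope property of a supremum controls first derivatives, not second ones, when the maximizer moves non-smoothly --- you later admit this is ``the most delicate part,'' but the argument as structured assumes it away. Second, and decisively, failure of $C^{2+h}$ for all gauges $h=o(1/\log(1/|t|))$ requires a \emph{lower} bound of the form $|d''(t)-d''(0)|\geq c/\log(1/|t|)$ along the chosen path: one must compute the coefficient of the regularized logarithm and rule out cancellation against the envelope and subleading terms, and one must verify the path is quantitatively transverse to the stratum so the saddle-connection scale really is comparable to $|t|$. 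Your final paragraph names this as ``the real content'' but supplies no estimate; it is exactly the bulk of Rees's paper, which also has to redo enough of the $C^2$ theory of \cite{Rees:distance:c2} with explicit error terms to make the comparison meaningful. So the proposal is a credible reconstruction of the mechanism behind the theorem, but it is a program rather than a proof: no step in it, as written, certifies the stated conclusion.
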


\subsection*{Proof of Theorem~\ref{thm:bsds}}~\\

Let $\mathcal{B}\subset\C^{N}$ be a bounded symmetric domain and
$\T_{g,n}$ a Teichm\"uller space with
$\text{dim}_{\C}\mathcal{B},\text{dim}_{\C}\T_{g,n} \geq 2$. Using the
fact that bounded symmetric domains and Teichm\"uller spaces contain
holomorphic isometric copies of $\mathbb{CH}^1$ through every point
and complex direction, and a theorem of
Slodkowski~\cite{Slodkowski:motions},~\cite{Earle:Markovic:isometries},
we deduce that any holomophic map $f$ between $\mathcal{B}$ and
$\T_{g,n}$ which is an isometry for the Kobayashi norms on tangent
spaces would be totally-geodesic and would therefore preserve the
Kobayashi distance for pairs of points.~\\

\noindent\textbf{($\mathcal{B} \not\hookrightarrow \T_{g,n}$)}~\\

Theorem~\ref{thm:balls} shows that there is no holomorphic isometry
$f: \mathbb{CH}^2 \rightarrow \T_{g,n}$. Moreover, an application of
Sullivan's rigidity theorem (see ~\cite{Tanigawa:holomap} for a
precise statement) shows that there is no proper holomorphic map
$f: \mathbb{CH}^1 \times \mathbb{CH}^1 \rightarrow \T_{g,n}$, hence
neither is such a holomorphic map that is an isometry.

However, for every bounded symmetric domain $\mathcal{B}$ with
$\text{dim}_{\C}\mathcal{B} \geq 2$ there is either a holomorphic
totally-geodesic isometry $\mathbb{CH}^2 \hookrightarrow \mathcal{B}$
(rank one) or a holomorphic totally-geodesic isometry
$\mathbb{CH}^1\times \mathbb{CH}^1 \hookrightarrow \mathcal{B}$
(higher rank).~\cite{Kobayashi:book:metric} We conclude that there is
no holomorphic isometric immersion $f: \mathcal{B} \hookrightarrow \T_{g,n}$.~\\

\noindent\textbf{($\T_{g,n} \not\hookrightarrow \mathcal{B}$)}~\\

Let $f: \T_{g,n} \hookrightarrow \mathcal{B}$ be a holomorphic
isometric immersion. Since $\text{dim}_{\mathbb{C}}\T_{g,n} \geq 2$,
we can choose two distinct points $X,Y \in \T_{g,n}$ as described in
Theorem~\ref{thm:notc2}; hence there is a real analytic path
$X(t):(-1,1)\rightarrow \T_{g,n}$ with $X(0)=X$ such that the
Teichm\"uller distance $d_{\T_{g,n}}(X(t),Y)$ is not $C^{2+h}$-smooth
at $t=0$ for every gauge function $h(t)$ with
$\lim_{t \rightarrow 0} \frac{h(t)}{1/log(1/|t|)} = 0$.

Let $p=f(Y)\in \mathcal{B}$ and
$V(\cdot): (-1,1) \rightarrow \mathcal{B}$ be the real analytic path
given by $V(t) = f(X(t))$ for $t \in (-1,1)$. Theorem~\ref{thm:c2}
shows $d_{\mathcal{B}}(p,V(t))$ is $C^2$-smooth at $t=0$ and
Proposition~\ref{prop:bsd} shows that it is real analytic in
$t^{1/K}$, for some fixed integer $K>0$, for all sufficiently small
$t \geq 0$. Therefore, it follows that $d_{\T_{g,n}}(X(t),Y)$ is
$C^{2+\frac{1}{K}}$-smooth, but this contradicts the choice of the
path $X(t) \in \T_{g,n}$, given by Theorem~\ref{thm:notc2}, by
considering the gauge function $h(t)= t^{1/K}$ for $t \geq 0$. We
conclude that there is no holomorphic isometric immersion
$f: \T_{g,n} \hookrightarrow \mathcal{B}$.\qed





\begin{thebibliography}{CCM}

\bibitem[SMA]{Antonakoudis:birational}
S.~M. Antonakoudis.
\newblock {Royden's theorem and birational geometry}.
\newblock {\em Preprint}.

\bibitem[Bers]{Bers:ts:survey}
L.~Bers.
\newblock {Finite dimensional Teichm\"uller spaces and generalizations}.
\newblock {\em Bull. Amer. Math. Soc.} {\bf 5}(1981), 131--172.

\bibitem[Bon]{Bonahon:currents}
F.~Bonahon.
\newblock {The geometry of Teichm\"uller space via geodesic currents}.
\newblock {\em Invent. math.} {\bf 92}(1988), 139--162.

\bibitem[CCM]{Chaika:Cheung:Masur:winning}
J.~Chaika, Y.~Cheung, and H.~Masur.
\newblock {Winning games for bounded geodesics in Teichm\"uller discs}.
\newblock {\em Preprint, 2011}.

\bibitem[EM]{Earle:Markovic:isometries}
C.~J. Earle and V.~Markovic.
\newblock {Isometries between the spaces of {$L^1$} holomorphic quadratic
  differentials on {R}iemann surfaces of finite type}.
\newblock {\em Duke Math. J.} {\bf 120}(2003), 433--440.

\bibitem[FLP]{FLP}
A.~Fathi, F.~Laudenbach, and V.~Po\'enaru.
\newblock {\em Travaux de Thurston sur les surfaces}.
\newblock Ast\'erisque, vol. 66--67, 1979.

\bibitem[GL]{Gardiner:Lakic:book}
F.~P. Gardiner and N.~Lakic.
\newblock {\em Quasiconformal Teichm\"uller Theory}.
\newblock Amer. Math. Soc., 2000.

\bibitem[Hel]{Helgason:book:dglgss}
S.~Helgason.
\newblock {\em Differential Geometry, Lie Groups, and Symmetric Spaces}.
\newblock Academic Press, 1978.

\bibitem[HM]{Hubbard:Masur:fol}
J.~Hubbard and H.~Masur.
\newblock {Quadratic differentials and foliations}.
\newblock {\em Acta Math.} {\bf 142}(1979), 221--274.

\bibitem[Hub]{Hubbard:book:T1}
J.~H. Hubbard.
\newblock {\em Teichm\"uller Theory, vol. I}.
\newblock Matrix Editions, 2006.

\bibitem[Ko1]{Kobayashi:book:metric}
S.~Kobayashi.
\newblock {\em Hyperbolic Manifolds and Holomorphic Mappings}.
\newblock Marcel Dekker, Inc., 1970.

\bibitem[Ko2]{Kobayashi:book:hyperbolic}
S.~Kobayashi.
\newblock {\em Hyperbolic Complex Spaces}.
\newblock Springer-Verlag, 1998.

\bibitem[Kra]{Kra:survey}
I.~Kra.
\newblock {Canonical mappings between Teichm\"{u}ller spaces}.
\newblock {\em Bull. Amer. Math. Soc.} {\bf 4}(1981), 143--179.

\bibitem[Ku]{Kubota:sym}
Y.~Kubota.
\newblock {On the Kobayashi and Carath\'eodory distances of bounded symmetric
  domains}.
\newblock {\em Kodai Math. J.} {\bf 12}(1989), 41--48.

\bibitem[Mas]{Masur:ergodic:geodesics}
H.~Masur.
\newblock {Uniquely ergodic quadratic differentials}.
\newblock {\em Comment. Math. Helv.} {\bf 55}(1980), 255--266.

\bibitem[Mc1]{McMullen:bild}
C.~McMullen.
\newblock {Billiards and Teichm\"uller curves on Hilbert modular surfaces}.
\newblock {\em J. Amer. Math. Soc.} {\bf 16}(2003), 857--885.

\bibitem[Mc2]{McMullen:covs}
C.~McMullen.
\newblock {Entropy on Riemann surfaces and Jacobians of finite covers}.
\newblock {\em Comment. Math. Helv., to appear}.

\bibitem[Rs1]{Rees:distance:c2}
M.~Rees.
\newblock {Teichm\"uller distance for analytically finite surfaces is {$C^2$}}.
\newblock {\em Proc. London Math. Soc. (3)} {\bf 85}(2002), 686--716.

\bibitem[Rs2]{Rees:distance:notc2}
M.~Rees.
\newblock {Teichm\"uller distance is not {$C^{2+\epsilon}$}}.
\newblock {\em Proc. London Math. Soc. (3)} {\bf 88}(2004), 114--134.

\bibitem[Roy]{Royden:metric}
H.~L. Royden.
\newblock {Automorphisms and isometries of Teichm\"uller space}.
\newblock In {\em Advances in the Theory of Riemann Surfaces}, pages 369--384.
  Princeton University Press, 1971.

\bibitem[Sat]{Satake:book:symmetric}
I.~Satake.
\newblock {\em Algebraic Structures of Symmetric Domains}.
\newblock Princeton University Press, 1980.

\bibitem[Sl]{Slodkowski:motions}
Z.~S{\l}odkowski.
\newblock {Holomorphic motions and polynomial hulls}.
\newblock {\em Proc. Amer. Math. Soc.} {\bf 111}(1991), 347--355.

\bibitem[Tan]{Tanigawa:holomap}
H.~Tanigawa.
\newblock {Holomorphic mappings into Teichm\"uller spaces}.
\newblock {\em Proc. Amer. Math. Soc.} {\bf 117}(1993), 71--78.

\bibitem[V]{Veech:triangles}
W.~Veech.
\newblock {Teichm\"uller curves in moduli space, Eisenstein series and an
  application to triangular billiards}.
\newblock {\em Invent. math.} {\bf 97}(1989), 553--583.

\end{thebibliography}
\end{document}